\newtheorem{theoremcounter}{Theorem Counter}[section]
\theoremstyle{definition}
\newtheorem{definition}[theoremcounter]{Definition}
\newtheorem*{remark}{Remark}
\newtheorem{observation}[theoremcounter]{Observation}
\theoremstyle{plain}
\newtheorem{lemma}[theoremcounter]{Lemma}
\newtheorem{proposition}[theoremcounter]{Proposition}
\newtheorem{corollary}[theoremcounter]{Corollary}
\newtheorem{theorem}[theoremcounter]{Theorem}
\newcommand{\loz}{\L{}o\'s}
\newcommand{\wrt}{w.r.t.\ }
\newcommand{\ie}{i.e.\ }
\newcommand{\N}{\mathbb{N}}
\newcommand{\calU}{\mathcal{U}}
\newcommand{\calF}{\mathcal{F}}
\newcommand{\frakA}{\mathfrak{A}}
\newcommand{\frakB}{\mathfrak{B}}
\newcommand{\frakC}{\mathfrak{C}}
\newcommand{\powfin}{\mathcal{P}_\mathrm{fin}}
\renewcommand{\phi}{\varphi}
\newcommand{\abs}[1]{\left\lvert#1\right\rvert}
\newcommand{\FO}{\mathrm{FO}}
\title{Ultraproducts as a tool for first-order inexpressibility in the finite and infinite}
\author{Philip Dittmann}
\date{\today}
\begin{document}

\maketitle

\begin{abstract}
Ultraproducts are a well-known tool in the classical model theory of first-order logic. We explore their uses in the context of finite model theory. 
\end{abstract}

\section*{Introduction}

It is well-known that many methods from the classical model theory of first-order logic, most importantly compactness, fail when we restrict attention to finite structures. In this paper we explore the applicability of ultraproducts -- a familiar construction in (infinite) model theory -- to known inexpressibility results in finite and infinite model theory. This complements the more commonly used approach through game techniques. In particular, we show new proofs for variants of the well-known locality theorems of Hanf and Gaifman with little or no use of games.

The idea of using infinitary methods in general and ultraproducts in particular for questions in finite model theory was explored by Väänänen in \cite{vaananen}, unknown to me at the time of writing. Some of our results are close to those in the preprint \cite{lindellWeinstein} by Lindell, Towsner and Weinstein.

\subsection*{Acknowledgements}

This paper is a condensed and revised version of my Bachelor's thesis at Technische Universität Darmstadt from June 2013. I am deeply indebted to my supervisor Martin Otto for helpful ideas, advice and corrections. I would also like to thank Steven Lindell, Henry Towsner and Scott Weinstein for a preprint of their paper \cite{lindellWeinstein}.

\section{General results for ultraproducts}

We assume that the reader is familiar with the ultraproduct construction and \loz's theorem.
In this section we review some results about the structure of ultraproducts. Some of their distinctive properties make them useful for inexpressibility results in the upcoming sections.

\begin{proposition}\label{propositionCardinalityOfUltraproducts}
	Let $(A_i)_{i \in \N}$ be a sequence of non-empty sets and consider them as structures over the empty signature. Let furthermore $\calU$ be an arbitrary non-principal ultrafilter on $\N$. Then the cardinality of the ultraproduct $\prod_i A_i / \calU$ is finite iff there is an $n \in \N$ such that $\abs{A_i} \leq n$ for $\calU$-many $i$. If the cardinality is not finite, then it is at least $2^\omega$.
\end{proposition}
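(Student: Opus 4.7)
The proposition has two parts: an iff characterization of when $\prod_i A_i/\calU$ is finite, and a lower bound of $2^\omega$ when it is infinite. The first is essentially a direct application of \loz's theorem, while the second requires an explicit combinatorial construction of many distinct ultraproduct classes.

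For the iff, I would use the fact that over the empty signature the property ``there are at most $n$ elements'' is captured by the first-order sentence $\phi_n := \forall x_0 \cdots x_n \bigvee_{0 \leq i < j \leq n} x_i = x_j$. If $\abs{A_i} \leq n$ for $\calU$-many $i$, then $A_i \models \phi_n$ for $\calU$-many $i$, and \loz's theorem yields $\prod_i A_i/\calU \models \phi_n$, so the ultraproduct has at most $n$ elements. Conversely, if the ultraproduct is finite of cardinality $k$, then $\prod_i A_i/\calU \models \phi_k$, and \loz's theorem pushes this back down to $\abs{A_i} \leq k$ for $\calU$-many $i$.

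For the $2^\omega$ lower bound, suppose no uniform bound exists. Since $\calU$ is an ultrafilter, the negation translates to $I_k := \{i \in \N : \abs{A_i} \geq 2^k\} \in \calU$ for every $k \in \N$. I would construct an injection $\{0,1\}^\omega \hookrightarrow \prod_i A_i/\calU$ by a binary-tree labelling: for each $i$, pick an integer $n_i$ such that $\abs{A_i} \geq 2^{n_i}$ and such that $\{i : n_i \geq k\} \in \calU$ for every $k$ (for instance, $n_i := \max\{k \leq i : \abs{A_i} \geq 2^k\}$, intersecting $I_k$ with the cofinite set $\{i \geq k\}$ to stay $\calU$-large), and then fix an injection $\iota_i : \{0,1\}^{n_i} \to A_i$. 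For each $\sigma \in \{0,1\}^\omega$ define $f_\sigma \in \prod_i A_i$ by $f_\sigma(i) := \iota_i(\sigma|_{n_i})$. If $\sigma \neq \tau$ first differ at position $k$, then for every $i$ with $n_i > k$ the restrictions $\sigma|_{n_i}$ and $\tau|_{n_i}$ disagree, so by injectivity of $\iota_i$ we have $f_\sigma(i) \neq f_\tau(i)$; since $\{i : n_i > k\} \in \calU$, the classes $[f_\sigma]_\calU$ and $[f_\tau]_\calU$ in the ultraproduct are distinct.

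The main obstacle is the bookkeeping in this last construction: the integers $n_i$ and the injections $\iota_i$ must be chosen uniformly enough that \emph{any} two distinct branches $\sigma, \tau$ of the binary tree are eventually separated on a $\calU$-large index set, regardless of where they first disagree. Once $n_i$ is chosen to exceed every fixed $k$ on a $\calU$-large set, this is automatic, and the cardinality bound $2^\omega$ then follows from $\abs{\{0,1\}^\omega} = 2^{\aleph_0}$.
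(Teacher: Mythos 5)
Your proposal is correct and follows essentially the same route as the paper: \loz's theorem for the first-order-expressible ``at most $n$ elements'' characterisation, and an injection of $\{0,1\}^\N$ into the ultraproduct obtained by truncating each binary sequence at an index-dependent depth that exceeds any fixed $k$ on a $\calU$-large set. Your bookkeeping via $n_i := \max\{k \leq i : \abs{A_i} \geq 2^k\}$ is a slightly cleaner packaging of the paper's partition of $\N$ into sets $I_k \notin \calU$, but the underlying argument is identical.
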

\begin{proof}
	The ultraproduct has at most $n$ elements iff $\calU$-many factors have at most $n$ elements, since this property can be expressed by a first-order formula. 
	Now assume conversely that for each $n \in \N$ we have $\calU$-many sets $A_i$ with more than $n$ elements. We show that there is an injective embedding of $\{ 0, 1 \}^\N$ into $\prod_i A_i / \calU$. To do this, we partition the index set $\N$ into disjoint sets $I_k$, $k \in \N$, such that $I_k \notin \calU$ and $\abs{A_i} \geq 2^k$ for all $i \in I_k$. This is always possible: 
	
	If $\calU$-many $A_i$ are finite, we simply put \[ I_k := \{ i \in \N \colon 2^k \leq \abs{A_i} < 2^{k + 1} \} \] for $k > 0$ and \[ I_0 := \{ i \in \N \colon \text{$A_i$ is infinite or has exactly one element } \} ;\] if $\calU$-many $A_i$ are infinite, we gather all $i$ such that $A_i$ is finite in $I_0$ and make all other $I_k$ one-element sets such that we eventually enumerate all $i$.
	
	Having found such a partition into sets $I_k$, we define the map
	\begin{align*}
		f \colon \{ 0, 1 \}^\N &\to \prod_i A_i \\
		 f\big( (c_n)_n \big)_i &= \sum_{l = 0}^{k - 1} c_l 2^l \in \{ 0, \dotsc, 2^k - 1 \} \subseteq A_i \text{ if $i \in I_k$} ,
	\end{align*}
	where we identify $\{ 0, \dotsc, 2^k - 1 \}$ with a subset of $A_i$. We want to show that $f$ composed with the canonical projection $\pi \colon \prod_i A_i \to \prod_i A_i / \calU$ is still injective. Let $(c_n)$ and $(d_n)$ be two $0$-$1$-sequences differing at some index $m \in \N$. Then $f\big((c_n)_n\big)_i \neq f\big((d_n)_n\big)_i$ for all $i \in I_k$ with $k > m$, \ie $f\big((c_n)_n\big)$ differs from $f\big((d_n)_n\big)$ at $\calU$-many indices and hence $\pi\big(f\big((c_n)_n\big)\big) \neq \pi\big(f\big((d_n)_n\big)\big)$. This proves that \[ \pi \circ f \colon \{ 0, 1 \}^\N \to \prod_i A_i / \calU \] is injective. 
\end{proof}

\begin{remark}
	One could also use a simpler diagonalisation argument in the spirit of Cantor to show that the ultraproduct cannot be countably infinite. However, we often do not want to assume the continuum hypothesis and will need the full strength of the proposition.
\end{remark}

\begin{corollary}
	Let $(A_i)_{i \in \N}$ be a sequence of non-empty sets of cardinality at most $2^\omega$ and $\calU$ a non-principal ultrafilter on $\N$. Then the ultraproduct $\prod_i A_i / \calU$ is either finite or its cardinality is exactly $2^\omega$.
\end{corollary}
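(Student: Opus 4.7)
The plan is to combine the lower bound from Proposition \ref{propositionCardinalityOfUltraproducts} with an elementary upper bound on the cardinality of the full product. First, I would dispose of the finite case by invoking the proposition directly: if $\prod_i A_i / \calU$ is finite we are done, and otherwise the proposition gives a lower bound of $2^\omega$ on its cardinality, with no hypothesis on the sizes of the $A_i$ beyond non-emptiness.

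For the matching upper bound I would estimate the cardinality of the full product before quotienting: since each $A_i$ has cardinality at most $2^\omega$, we have
\[ \abs{\textstyle\prod_i A_i} \leq (2^\omega)^{\aleph_0} = 2^{\aleph_0 \cdot \aleph_0} = 2^\omega. \]
Passing to the quotient $\prod_i A_i / \calU$ can only decrease cardinality, so $\abs{\prod_i A_i / \calU} \leq 2^\omega$ as well.

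Combining the two bounds, in the infinite case we obtain exactly $2^\omega$, which is the desired conclusion. There is no real obstacle here; the only subtlety worth flagging is that the lower bound in Proposition \ref{propositionCardinalityOfUltraproducts} makes no size assumption on the factors, so the cardinality hypothesis is used solely in the upper bound step via the standard cardinal arithmetic identity $(2^\omega)^{\aleph_0} = 2^\omega$.
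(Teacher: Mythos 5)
Your argument is correct and is essentially the paper's own: the upper bound comes from $\abs{\prod_i A_i} \leq (2^\omega)^\omega = 2^\omega$ and the fact that the quotient map is surjective, while the lower bound in the infinite case is exactly what Proposition \ref{propositionCardinalityOfUltraproducts} provides. Nothing to add.
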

\begin{proof}
	The set $\prod_i A_i$ has at most cardinality $(2^\omega)^\omega = 2^\omega$. Hence its image $\prod_i A_i / \calU$ under the canonical projection has cardinality at most $2^\omega$.
\end{proof}

For the investigation of other ultraproduct properties we use the familiar model-theoretic notion of \emph{types}. In the following we will always use partial $1$-types with parameters: a type of a structure $\frakA$ (with parameters from $X \subseteq A$) is therefore a set $\Phi \subseteq \FO_1(\sigma \mathop{\dot\cup} \{ c_x \colon x \in X \})$, where the signature is expanded by new constant symbols standing for elements of $X$, such that there is an elementary extension $\frakB \succeq \frakA$ and an element $b \in B$ with $\frakB, b \models \Phi$ where $\frakB$ is understood to interpret $c_x$ as $x \in A \subseteq B$ for all $x \in X$. 
We recall that a set of formulae $\Phi$ is a type of $\frakA$ iff it is finitely realised, \ie if for all finite $\Phi_0 \subseteq \Phi$ we have \[ \frakA \models \exists x \bigwedge \Phi_0(x) .\]

We also recall the definition of saturation and an important property of saturated structures.
\begin{definition}
	Let $\frakA$ be a $\sigma$-structure and $\lambda$ an infinite cardinal. If every type of $\frakA$ with fewer than $\lambda$ parameters is realised in $\frakA$, we call $\frakA$ \emph{$\lambda$-saturated}. If $\frakA$ is $\abs A$-saturated, we simply call $\frakA$ \emph{saturated}.
\end{definition}
\begin{proposition}[{\cite[Theorem 8.1.8]{hodges}}] \label{propElemEquivSatStrAreIsom}
	Let $\frakA$ and $\frakB$ be $\sigma$-structures of the same cardinality $\lambda$ such that both $\frakA$ and $\frakB$ are ($\lambda$-)saturated. If $\frakA$ and $\frakB$ are elementarily equivalent, then they are isomorphic.
\end{proposition}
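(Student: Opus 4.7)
The plan is to build an isomorphism $f \colon \frakA \to \frakB$ by a transfinite back-and-forth argument, with $\lambda$-saturation powering each extension step. I enumerate $A = \{ a_\alpha \colon \alpha < \lambda \}$ and $B = \{ b_\alpha \colon \alpha < \lambda \}$, and construct a continuous increasing chain of partial elementary maps $f_\alpha \colon X_\alpha \to Y_\alpha$ with $X_\alpha \subseteq A$, $Y_\alpha \subseteq B$, $\abs{X_\alpha} < \lambda$, and $\{ a_\beta \colon \beta < \alpha \} \subseteq X_\alpha$, $\{ b_\beta \colon \beta < \alpha \} \subseteq Y_\alpha$. The base case $f_0 = \emptyset$ is partial elementary since $\frakA \equiv \frakB$, and at limit stages one takes unions, which preserve partial elementarity.

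At a successor stage $\alpha + 1$, I perform one forth step followed by one back step. For the forth step, if $a_\alpha \notin X_\alpha$, I consider the $1$-type $p(x)$ realised by $a_\alpha$ over $X_\alpha$ in $\frakA$ and transport it along $f_\alpha$ to a set of formulae $q(x)$ over $Y_\alpha$ in $\frakB$. Since $f_\alpha$ is partial elementary, every finite subset of $q$ is realised in $\frakB$, so $q$ is a type of $\frakB$ in the sense defined above. Because $\abs{Y_\alpha} < \lambda$, $\lambda$-saturation of $\frakB$ yields a realisation $b \in B$, and extending $f_\alpha$ by $a_\alpha \mapsto b$ produces a still partial elementary map, since $b$ realises the transported type. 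The back step is symmetric, using $\lambda$-saturation of $\frakA$ to ensure that $b_\alpha$ lies in the range of the next map. If $a_\alpha$ is already in $X_\alpha$, or $b_\alpha$ in $Y_\alpha$, the corresponding step is simply skipped.

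After $\lambda$ stages, the union $f = \bigcup_{\alpha < \lambda} f_\alpha$ is a partial elementary map with domain $A$ and range $B$, and hence an isomorphism $\frakA \to \frakB$. The one delicate point is the cardinality bookkeeping needed to invoke $\lambda$-saturation at every step: at stage $\alpha < \lambda$ one has $\abs{X_\alpha} \leq \abs{\alpha} + \aleph_0 < \lambda$ when $\lambda$ is uncountable, while $\abs{X_n}$ is finite for each finite $n$ when $\lambda = \aleph_0$. Granted this, the argument proceeds smoothly; the conceptual engine is simply the observation that a partial elementary map transports each $1$-type over its domain to a $1$-type over its range, which $\lambda$-saturation of the target can then realise.
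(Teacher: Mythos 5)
Your proof is correct and is precisely the standard back-and-forth argument that the paper delegates to the cited reference (Hodges, Theorem 8.1.8): transfinitely extend partial elementary maps, using $\lambda$-saturation of the target to realise the transported complete $1$-type at each step, with the cardinality bookkeeping you note ensuring fewer than $\lambda$ parameters at every stage. Nothing to add.
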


We now show that ultraproducts are $\omega_1$-saturated in important cases.
\begin{proposition}\label{propCountableTypesAreRealised}
	Let $(\frakA_i)_{i \in \N}$ be a sequence of $\sigma$-structures, $\calU$ a non-principal ultrafilter on $\N$ and $\Phi$ a type of $\frakA :=\prod_i \frakA_i / \calU$ with $\abs\Phi \leq \omega$. Then $\Phi$ is realised in $\frakA$. Consider the following additional condition:
	\[ \text{For all $m \in \N$ and all finite subsets $\Phi_0 \subseteq \Phi$ it holds that $\frakA \models \exists^{\geq m} x \bigwedge \Phi_0(x)$.} \tag{$\ast$} \] 
	If condition $(\ast)$ holds, then there are at least $2^\omega$ elements of $\frakA$ realising $\Phi$; otherwise, the number of elements of $\frakA$ realising $\Phi$ is equal to $m_\text{max}$, the maximal $m \in \N$ for which $(\ast)$ holds for all finite subsets $\Phi_0 \subseteq \Phi$, and therefore finite.
\end{proposition}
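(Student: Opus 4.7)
My plan is to handle the three claims in turn, using \loz's theorem throughout and adapting the coding construction from the proof of Proposition~\ref{propositionCardinalityOfUltraproducts} for the $2^\omega$ bound. I will fix representatives in $\prod_i A_i$ for the finitely many parameters appearing in $\Phi$ so that each $\phi \in \Phi$ makes sense at every factor $\frakA_i$.

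For the realisability claim, enumerate $\Phi = \{\phi_n : n \in \N\}$, set $\Phi_n = \{\phi_0, \dotsc, \phi_n\}$, and define $J_n = \{i \in \N : \frakA_i \models \exists x \bigwedge \Phi_n(x)\}$. Finite realisability combined with \loz's theorem gives $J_n \in \calU$; intersecting, assume the $J_n$ are decreasing. For each $i$ pick $a_i \in A_i$ realising $\Phi_{n(i)}$ in $\frakA_i$, where $n(i) = \sup\{n : i \in J_n\}$, with $a_i$ arbitrary when this is vacuous or $n(i) = \infty$. Since $J_k \subseteq \{i : \frakA_i \models \phi_k(a_i)\}$ for every $k$, \loz's theorem yields $\frakA \models \phi_k([(a_i)])$, and thus $[(a_i)]$ realises $\Phi$.

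Under $(\ast)$, I mimic Proposition~\ref{propositionCardinalityOfUltraproducts}. Replace $J_n$ by $\{i : \frakA_i \models \exists^{\geq 2^n} x \bigwedge \Phi_n(x)\}$, still decreasing and in $\calU$ by $(\ast)$ and \loz's theorem. The goal is to partition $\N = \bigsqcup_k I_k$ with every $I_k \notin \calU$ and $I_k \subseteq J_k$ for $k \geq 1$. If $\bigcap_n J_n \notin \calU$, let $I_k = J_k \setminus J_{k+1}$ for $k \geq 1$ and collect the rest into $I_0$; each $I_k$ (for $k \geq 1$) is a difference of $\calU$-sets and so non-$\calU$, while $I_0$ is a finite union of such. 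If instead $\bigcap_n J_n \in \calU$, partition this set into infinitely many finite pieces $T_0, T_1, \dotsc$ (all non-$\calU$ by non-principality of $\calU$), set $I_k = T_k$, and throw $\N \setminus \bigcap_n J_n$ into $I_0$. For each $i \in I_k \cap J_k$ fix an enumeration of at least $2^k$ distinct witnesses of $\Phi_k$ in $\frakA_i$, and define $f \colon \{0,1\}^\N \to \prod_i A_i$ exactly as in the cited proposition: $f((c_n))_i$ is the $\left(\sum_{l < k} c_l 2^l\right)$-th such witness for $i \in I_k \cap J_k$, and arbitrary elsewhere. Sequences differing at index $m$ then produce $f$-coordinates that disagree on the $\calU$-set $\bigcup_{k > m}(I_k \cap J_k)$, so $\pi \circ f$ is injective; and for fixed $k$, on the $\calU$-set $\bigcup_{k' \geq k}(I_{k'} \cap J_{k'})$ the value of $f((c_n))$ realises $\Phi_{k'}$ and in particular $\phi_k$, so $\pi(f((c_n)))$ realises $\Phi$.

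If $(\ast)$ fails, fix a finite $\Phi_0^* \subseteq \Phi$ achieving the maximum $m_{\mathrm{max}}$, so that exactly $m_{\mathrm{max}}$ elements satisfy $\bigwedge \Phi_0^*$. Any realisation of $\Phi$ satisfies $\bigwedge \Phi_0^*$, yielding an upper bound of $m_{\mathrm{max}}$. For the matching lower bound, every finite $\Phi_0 \supseteq \Phi_0^*$ still has at least $m_{\mathrm{max}}$ realisations of $\bigwedge \Phi_0$ (by maximality of $m_{\mathrm{max}}$), sitting inside the $m_{\mathrm{max}}$ realisations of $\bigwedge \Phi_0^*$ and hence coinciding with them; those same $m_{\mathrm{max}}$ elements thus realise every finite conjunction from $\Phi$ and so realise $\Phi$. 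The main obstacle is the case split in the $(\ast)$ argument: the partition pieces must all be outside $\calU$ regardless of whether $\bigcap_n J_n$ lies in $\calU$, and getting this cleanly requires both non-principality of $\calU$ (to split a $\calU$-set into finite, hence non-$\calU$, pieces) and the fact that a finite union of non-$\calU$-sets is not in $\calU$.
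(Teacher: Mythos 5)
Your first paragraph contains a genuine error. You set $n(i) = \sup\{n : i \in J_n\}$ and declare $a_i$ to be \emph{arbitrary} when $n(i) = \infty$, but then the claimed inclusion $J_k \subseteq \{i : \frakA_i \models \phi_k(a_i)\}$ fails at every index $i$ with $n(i) = \infty$, and this set can be all of $\N$: take $\frakA_i = (\N, <)$ for every $i$ and let $\Phi$ be the type of an element with infinitely many predecessors. Then $J_n = \N$ for all $n$, so $n(i) = \infty$ everywhere, every $a_i$ is ``arbitrary'', and $[(a_i)_i]$ need not realise $\Phi$. The point is that $i \in \bigcap_n J_n$ only says each finite fragment $\Phi_n$ has a witness in $\frakA_i$, not that a single element of $\frakA_i$ witnesses all of them. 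The standard repair — and what the paper does with its choice of $m_i \leq i$ — is to cap the level at the index, e.g.\ $m_i := \max\{n \leq i : i \in J_n\}$; then every $m_i$ is finite, $a_i$ realising $\Phi_{m_i}$ exists, and $\{i : m_i \geq k\} \supseteq J_k \cap \{i : i \geq k\} \in \calU$ by non-principality. (A smaller slip: a countable type may carry countably many parameters, not finitely many, but fixing a representative for each still works.)

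That said, the proposition survives, because your second and third paragraphs are correct and between them they re-establish realisability in both cases: under $(\ast)$ your partition argument (which correctly handles $\bigcap_n J_n \in \calU$ by chopping it into finite, hence non-$\calU$, pieces — exactly the capping idea missing from paragraph one) produces $2^\omega$ realisations, and when $(\ast)$ fails your $\Phi_0^*$ argument produces exactly $m_\text{max} \geq 1$ of them. Two genuine differences from the paper are worth noting. For the $2^\omega$ bound the paper simply invokes Proposition~\ref{propositionCardinalityOfUltraproducts} applied to the witness sets, whereas you inline and re-verify the coding; yours is longer but more self-contained. For the $m_\text{max}$ case the paper builds the $m_\text{max}$ realisations componentwise from witnesses in the factors, whereas your argument via the extremal finite subset $\Phi_0^*$ is purely internal to $\frakA$ and uses no ultraproduct structure at all — it shows that in \emph{any} structure where $\Phi$ is finitely realised but $(\ast)$ fails, $\Phi$ is realised by exactly $m_\text{max}$ elements. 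That is a cleaner and more general observation than the paper's.
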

\begin{proof}
	Since we can treat any parameters as additional constants in the signature, it suffices to consider $\Phi$ without parameters. Enumerate $\Phi$ as $(\phi_n)_{n \in \N}$. We now define new formulae $\psi_n$. If condition $(\ast)$ holds, let \[ \psi_n := \exists^{\geq n}x  \bigwedge_{k < n} \phi_k(x). \] Otherwise, let \[ \psi_n := \exists^{\geq m_\text{max}} x \bigwedge_{k < n} \phi_k(x). \] In both cases we have $\psi_0 \equiv \top$ by definition of the empty conjunction. For each $i \in \N$ we choose the maximal $m_i \leq i$ such that $\frakA_i \models \psi_{m_i}$. For all $n \in \N$ we now have $\frakA \models \psi_n$ and therefore $\calU$-many $i \geq n$ with $\frakA_i \models \psi_n$ and hence $m_i \geq n$. Let now $a := [(a_i)_i] \in A$ such that $\frakA_i, a_i \models \bigwedge_{k < m_i} \phi_k$. Then \loz's theorem implies that $\frakA, a \models \bigwedge_{k < n} \phi_k$ for all $n$; hence $a$ realises $\Phi$ in $\frakA$.
	
	The number of witnesses that we get in this manner depends on our choice of $\psi_n$. If condition $(\ast)$ holds and therefore $\psi_n = \exists^{\geq n}x  \bigwedge_{k < n} \phi_k(x)$, then $\frakA_i$ has at least $m_i$ witnesses for $\bigwedge_{k < m_i} \phi_k(x)$. Now Proposition \ref{propositionCardinalityOfUltraproducts} implies that we have constructed at least $2^\omega$ realisations of $\Phi$. But if condition $(\ast)$ is violated by $m_\text{max} + 1$ and some $\Phi_0 \subseteq \Phi$, clearly $\Phi$ itself cannot be realised more than $m_\text{max}$ times. On the other hand, all $\frakA_i$ have at least $m_\text{max}$ many witnesses for $\bigwedge_{k < m_i} \phi_k(x)$. This yields $m_\text{max}$ realisations of $\Phi$ in $\frakA$.
\end{proof}

\begin{corollary}
	Let $(\frakA_i)_{i \in \N}$ be a sequence of structures over a countable signature $\sigma$ and $\calU$ a non-principal ultrafilter on $\N$. Then the ultraproduct $\prod_i \frakA_i / \calU$ is $\omega_1$-saturated.
\end{corollary}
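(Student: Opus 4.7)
The plan is to reduce the general case (countably many parameters) to the parameter-free case already handled in Proposition \ref{propCountableTypesAreRealised}. Let $\Phi$ be a type of $\frakA := \prod_i \frakA_i / \calU$ with parameters in some countable set $X \subseteq A$. First I observe that the expanded signature $\sigma' := \sigma \mathop{\dot\cup} \{ c_x \colon x \in X \}$ is still countable, so there are only countably many first-order $\sigma'$-formulas in one free variable; hence $\Phi$ is automatically at most countable.

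Next I move the parameters into the structures. For each $x \in X$ pick a representing sequence $(x_i)_{i \in \N} \in \prod_i A_i$ with $x = [(x_i)_i]$, and expand each $\frakA_i$ to a $\sigma'$-structure $\frakA_i'$ by interpreting $c_x^{\frakA_i'} := x_i$. By \loz's theorem applied to atomic formulas $c_x = c_y$ and more generally to all $\sigma'$-sentences, the ultraproduct $\prod_i \frakA_i' / \calU$ is precisely the expansion of $\frakA$ in which $c_x$ is interpreted as $x$. Thus $\Phi$ is now a countable, parameter-free type of an ultraproduct of $\sigma'$-structures indexed by $\N$ along the same non-principal ultrafilter $\calU$.

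Applying Proposition \ref{propCountableTypesAreRealised} to this situation gives an element of $\prod_i \frakA_i' / \calU$ realising $\Phi$, and this element lies in the common underlying set of $\frakA$ and its expansion. Hence $\Phi$ is realised in $\frakA$, which is what $\omega_1$-saturation demands.

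The only thing requiring any care is the first step -- ensuring that once we allow countably many parameters, the type itself remains countable so that Proposition \ref{propCountableTypesAreRealised} actually applies -- and this is immediate from the countability of $\sigma$. Everything else is bookkeeping via \loz's theorem, so no genuine obstacle is expected.
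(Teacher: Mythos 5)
Your proof is correct and follows essentially the same route as the paper: the only substantive point is that a type with countably many parameters over a countable signature is itself countable, after which Proposition \ref{propCountableTypesAreRealised} applies. Your explicit expansion of each factor $\frakA_i$ by interpretations of the parameter constants is exactly the bookkeeping the paper delegates to the phrase ``we can treat any parameters as additional constants in the signature'' inside the proof of that proposition.
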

\begin{proof}
	Let $\Phi$ be a type with countably many parameters. The signature remains countable when we add constant symbols for these parameters. Since the set of first-order formulae over a countable signature is itself countable, $\Phi$ must be countable as well. By the preceding proposition, $\Phi$ is realised in the ultraproduct.
\end{proof}

\begin{corollary}\label{corollaryElemEquivUltraprodsAreIsomAssumingCH}
	Assume the continuum hypothesis and let $(\frakA_i)_{i \in \N}$ and $(\frakB_i)_{i \in \N}$ be two sequences of structures over a countable signature $\sigma$ such that each individual structure has cardinality at most $2^\omega$. Let furthermore $\calU_1$ and $\calU_2$ be two non-principal ultrafilters on $\N$ and consider the ultraproducts $\frakA := \prod_i \frakA_i / \calU_1$ and $\frakB := \prod_i \frakB_i / \calU_2$. Then these ultraproducts are isomorphic iff they are elementarily equivalent.
\end{corollary}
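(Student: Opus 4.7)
The plan is to reduce the statement to Proposition \ref{propElemEquivSatStrAreIsom}, which asserts that elementarily equivalent saturated structures of the same cardinality are isomorphic. The ``only if'' direction is immediate since isomorphic structures are always elementarily equivalent, so all work goes into the ``if'' direction.

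First I would split into cases according to whether $\frakA$ is finite or infinite. If $\frakA$ is finite, then the sentence ``there exist exactly $n$ elements'' (for $n = \abs{\frakA}$) holds in $\frakA$ and therefore, by elementary equivalence, in $\frakB$, so $\frakA$ and $\frakB$ are finite of the same cardinality; any two elementarily equivalent finite structures of the same size are isomorphic (one can write down a single first-order sentence fixing the complete diagram up to isomorphism), so we are done.

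If $\frakA$ is infinite, I would invoke the corollary following Proposition \ref{propositionCardinalityOfUltraproducts}: since each factor $\frakA_i$ has cardinality at most $2^\omega$, the infinite ultraproduct $\frakA$ has cardinality exactly $2^\omega$, and the same applies to $\frakB$ (which is also infinite, being elementarily equivalent to $\frakA$). Assuming CH, $2^\omega = \omega_1$, so both ultraproducts have cardinality $\omega_1$. By the corollary preceding the statement, both $\frakA$ and $\frakB$ are $\omega_1$-saturated, and since their cardinality equals $\omega_1$, they are in fact saturated in the sense of the above definition. Proposition \ref{propElemEquivSatStrAreIsom} then yields $\frakA \cong \frakB$.

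The argument is essentially a bookkeeping exercise assembling results already proven in the section, so there is no real obstacle; the only subtlety is the invocation of CH, which is needed precisely to identify $2^\omega$ with $\omega_1$ so that $\omega_1$-saturation upgrades to full saturation for structures of size $2^\omega$. Without CH, one would obtain two $\omega_1$-saturated structures of cardinality $2^\omega$, which need not be isomorphic in general.
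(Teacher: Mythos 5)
Your proof is correct and follows essentially the same route as the paper's (very terse) proof: combine the cardinality corollary, the $\omega_1$-saturation corollary, CH to identify $2^\omega$ with $\omega_1$, and Proposition \ref{propElemEquivSatStrAreIsom}. You are in fact somewhat more careful than the paper in separating off the finite case and in verifying that both ultraproducts have the same cardinality.
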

\begin{proof}
	Both ultraproducts have cardinality at most $2^\omega = \omega_1$. Hence Proposition \ref{propElemEquivSatStrAreIsom} is applicable, so the ultraproducts are already isomorphic if they are elementarily equivalent.
\end{proof}

\begin{remark}
	This corollary shows that for $\N$-fold ultraproducts some usually distinct notions of equivalence between structures collapse; we will see one more example of this in the next section. In the other sections we will prove isomorphy of some ultraproducts without requiring the continuum hypothesis.
\end{remark}

In the remainder of the section we will show that Corollary \ref{corollaryElemEquivUltraprodsAreIsomAssumingCH} may fail without the continuum hypothesis. This is related to work by Shelah (\cite{viveLaDifference}), although our counterexamples have been constructed without reference to it.

	We first construct a non-saturated ultraproduct. Consider the signature $\sigma = \{ E \}$ with a single binary relation and a two-sorted structure $\frakA$ given by $\omega_1$ and the set of its finite subsets, \ie \[ A := (\{ 0 \} \times \omega_1) \cup (\{ 1 \} \times \powfin(\omega_1)) \] with the relation interpreted by \begin{multline*} (a, b) \in E^\frakA :\iff \\ \text{ there exist $a_0 \in \omega_1, b_0 \in \powfin(\omega_1)$ such that $a = (0, a_0)$, $b = (1, b_0)$ and $a_0 \in b_0$}, \end{multline*} that is, we take $E$ to simply stand for elementhood. Now we consider a non-principal ultrafilter $\calU$ on $\N$ and form the ultrapower $\frakA_\infty := \frakA^\N / \calU$. Let \[ \iota \colon A \to A_\infty, a \mapsto [(a)_{n \in \N}] \] be the diagonal embedding. Consider the type \[ \Phi := \{ c_a E x \colon a \in \iota(\{ 0 \} \times \omega_1) \} \] where $c_a$ is the constant representing the parameter $a$. We easily see that $\Phi$ is finitely realised in $\frakA_\infty$ since for any finite subset $\Phi_0 \subseteq \Phi$ we can construct a witness componentwise. But for every $b = [ (b_i)_{i \in \N} ] \in A_\infty$ the set \[ I := \{ a \in \{ 0 \} \times \omega_1 \colon \text{ there exists an $i \in \N$ such that $a E^\frakA b_i$} \} \] is countable (since the set of all $a$ satisfying $a E^\frakA b_i$ for some fixed $i$ is finite); therefore its subset \[ \{ a \in \{ 0 \} \times \omega_1 \colon \iota(a) E^{\frakA_\infty} b \} \] is also countable and therefore not all of $\{ 0 \} \times \omega_1$, implying that $\Phi$ is not realised in $\frakA_\infty$. The type $\Phi$ has exactly $\omega_1$ parameters, so $\frakA_\infty$ is not $\omega_2$-saturated. However, $\frakA_\infty$ has cardinality $2^\omega$, which is at least $\omega_2$ if we assume the negation of the continuum hypothesis. Therefore $\frakA_\infty$ is not saturated under this assumption.

\begin{proposition}\label{propositionNonIsomElemEquivUltraprods}
	It is consistent with ZFC to have two $\N$-fold ultraproducts, both of cardinality $2^\omega$, which are elementarily equivalent but not isomorphic.
\end{proposition}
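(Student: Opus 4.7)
The plan is to work under the negation of the continuum hypothesis, taking $2^\omega = \omega_2$ for concreteness (which is consistent with ZFC), and to exhibit the required pair of ultraproducts by contrasting their saturation properties. The first member is $\frakA_\infty = \frakA^\N / \calU$ from the preceding construction: we have already established that it has cardinality $2^\omega = \omega_2$ and that the type $\Phi$ with $\omega_1$ parameters is finitely satisfiable but not realised, so $\frakA_\infty$ fails to be $\omega_2$-saturated. To finish, it therefore suffices to construct a second $\N$-fold ultraproduct $\frakB_\infty$, elementarily equivalent to $\frakA_\infty$ and of the same cardinality $2^\omega$, that \emph{is} $\omega_2$-saturated --- equivalently, saturated at its own cardinality $\omega_2$. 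Non-isomorphism then follows automatically, since $\omega_2$-saturation is invariant under isomorphism; note the contrast with Proposition \ref{propElemEquivSatStrAreIsom}, which would force the two to be isomorphic if both were saturated.

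My candidate for $\frakB_\infty$ is $\frakC^\N/\mathcal{V}$, where $\frakC$ is a saturated model of $\mathrm{Th}(\frakA)$ of cardinality $2^\omega = \omega_2$ and $\mathcal{V}$ is a non-principal ultrafilter on $\N$. Under the further set-theoretic assumption $2^{\omega_1} = 2^\omega = \omega_2$ (also consistent with ZFC), such a $\frakC$ exists by standard existence theorems for saturated models of theories in countable signatures. By \loz's theorem, $\frakB_\infty$ is elementarily equivalent to $\frakC$ and hence to $\frakA$; by Proposition \ref{propositionCardinalityOfUltraproducts} and its corollary, it has cardinality exactly $2^\omega$. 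What remains is to verify that $\frakB_\infty$ inherits $\omega_2$-saturation.

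The main obstacle is exactly this saturation step. The corollary to Proposition \ref{propCountableTypesAreRealised} delivers only $\omega_1$-saturation of $\N$-fold ultraproducts for free, and upgrading to $\omega_2$-saturation is not automatic. I would attempt to exploit the saturation of $\frakC$ directly: given an $\omega_1$-size type $\Psi$ over parameters in $\frakB_\infty$ that is finitely satisfiable, enumerate $\Psi$ as $(\psi_\alpha)_{\alpha < \omega_1}$, represent its parameters coordinate-wise, realise increasingly large finite initial segments of $\Psi$ at later and later coordinates using the saturation of $\frakC$, and then invoke a measure-one set argument to assemble the coordinate-wise witnesses into a global realisation in $\frakB_\infty$. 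Orchestrating the interplay between $\mathcal{V}$ and the enumeration of $\Psi$ so that all $\omega_1$ formulas are captured is the technical heart of the proof; once it is completed, the proposition follows at once, since the non-$\omega_2$-saturated $\frakA_\infty$ and the $\omega_2$-saturated $\frakB_\infty$ cannot be isomorphic despite being elementarily equivalent of the same cardinality $2^\omega$.
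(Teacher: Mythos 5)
Your overall strategy --- contrasting the failure of $\omega_2$-saturation in $\frakA_\infty$ with a second, more saturated ultraproduct of the same theory, under a consistent cardinal-arithmetic hypothesis guaranteeing a suitably saturated model $\frakC$ of $\mathrm{Th}(\frakA)$ of cardinality $2^\omega$ --- is essentially the paper's. But the step you yourself flag as the technical heart, $\omega_2$-saturation of $\frakB_\infty = \frakC^\N/\mathcal{V}$, is a genuine gap, and your sketch cannot close it. Realising ``increasingly large finite initial segments of $\Psi$ at later and later coordinates'' attends to at most countably many formulae of $\Psi$ (finitely many at each of the $\omega$ coordinates), so for a type of size $\omega_1$ uncountably many formulae are never handled; this is exactly why the standard argument (Proposition \ref{propCountableTypesAreRealised}) stops at $\omega_1$-saturation. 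The alternative of realising the entire coordinate projection $\Psi_i$ in $\frakC$ at each $i$ also fails for a general type: finite satisfiability of $\Psi$ in the ultrapower only gives, for each finite $\Psi_0 \subseteq \Psi$, a $\mathcal{V}$-large set of coordinates where $\exists x \bigwedge \Psi_0$ holds, and the intersection of these $\omega_1$ many $\mathcal{V}$-large sets may well be empty, so no single coordinate need see $\Psi_i$ as finitely satisfiable. Thus $\omega_2$-saturation of $\frakB_\infty$ is both unproved and, for an arbitrary non-principal ultrafilter on $\N$, quite possibly false.

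The paper sidesteps this by proving much less: it never shows the second ultrapower is $\omega_2$-saturated, only that the one type that matters is realised there. Take an $\omega_2$-saturated elementary extension $\frakB \succeq \frakA$ of cardinality at most $2^\omega$ (using $2^{\omega_2} = 2^\omega$, consistent by Easton), suppose $f \colon \frakA_\infty \to \frakB_\infty$ were an isomorphism, and consider the image $\Phi'$ of the unrealised type $\Phi$. The crucial point is that the coordinate projections $\Phi'_i$ are automatically finitely satisfiable in the single factor $\frakB$ --- not because of anything about $f$ or the ultrafilter, but because $\mathrm{Th}(\frakA)$ proves that any finite set of points of the first sort is contained in some point of the second sort. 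Hence $\omega_2$-saturation of the factor $\frakB$ realises each $\Phi'_i$ by some $b_i$, and $[(b_i)_i]$ realises $\Phi'$ in $\frakB_\infty$, contradicting the non-realisation of $\Phi$ in $\frakA_\infty$. You should replace your global saturation claim by this type-specific, coordinate-wise argument; as it stands, your proof does not go through.
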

\begin{proof}
	By Easton's theorem \cite{easton} it is compatible with ZFC to require $2^{\omega_2} = 2^\omega$. Consider the structure $\frakA$ from the observation above. By Theorem 8.2.1 in \cite{hodges} we can find an $\omega_2$-saturated elementary extension $\frakB$ of $\frakA$ of cardinality no greater than \[ \omega_1^{< \omega_2} \leq \omega_1^{\omega_2} \leq (2^\omega)^{\omega_2} = 2^{\omega \omega_2} = 2^{\omega_2} = 2^\omega. \]
	Let now $\calU_1$ and $\calU_2$ be arbitrary (not necessarily distinct) non-principal ultrafilters on $\N$ and consider the ultrapowers $\frakA_\infty = \frakA^\N / \calU_1$ and $\frakB_\infty = \frakB^\N / \calU_2$. Assume that there exists an isomorphism $f \colon \frakA_\infty \to \frakB_\infty$. We know that the type
	\[ \Phi := \{ c_a E x \colon a \in \iota(\{ 0 \} \times \omega_1) \} \]
	of $\frakA_\infty$ is not realised in $\frakA_\infty$. If we can show that there is a $b$ in $\frakB_\infty$ that realises
	\[ \Phi' := \{ c_{f(a)} E x \colon a \in \iota(\{ 0 \} \times \omega_1) \}, \]
	we have a contradiction, since then $f^{-1}(b)$ would necessarily realise $\Phi$ in $\frakA_\infty$. But finding $b$ is easy: For every $a \in \iota(\{ 0 \} \times \omega_1)$ pick a representative $a' = (a'_i)_i \in B^\N$ of $f(a)$. Now for every $i \in \N$ we can choose $b_i$ to satisfy
	\[ \Phi'_i := \{ c_{a'_i} E x \colon a \in \iota(\{ 0 \} \times \omega_1) \}: \] $\Phi'_i$ is finitely realised in $\frakB$ because $\frakB \equiv \frakA$ and hence there must exist such a $b_i \in B$ by $\omega_2$-saturation of $\frakB$. It is now clear that $b := [(b_i)_{i \in \N}]$ realises the type $\Phi'$.
	
	We have therefore shown that $\frakA_\infty$ and $\frakB_\infty$ are not isomorphic. However, they are elementarily equivalent since $\frakB \succeq \frakA$.
\end{proof}

\section{Notions from games: ultraproducts as a limit of their factors}

The general usefulness of ultraproducts stems from \loz's theorem; it allows us to interpret an ultraproduct as a form of ``limit'' of its factors, at least \wrt first-order properties. In this section we want to look at the purely logical properties of ultraproducts; this will also lead us to Ehrenfeucht games.

We recall that two structures are called \emph{$m$-equivalent}, denoted by $\equiv_m$, if they satisfy the same first-order sentences of quantifier-rank up to $m$.
We also recall the following facts:
\begin{proposition}
	Let $\sigma$ be a finite relational signature and $m \in \N$.
	\begin{enumerate}
		\item There are only finitely many first-order formulae in signature $\sigma$ up to quantifier rank $m$.
		\item For every $\sigma$-structure $\frakA$ there is a sentence $\chi$ of quantifier-rank $m$ that axiomatises the $\equiv_m$-equivalence class of $\frakA$, \ie for any $\sigma$-structure $\frakB$ it holds that $\frakA \equiv_m \frakB \iff \frakB \models \chi$.
	\end{enumerate}
\end{proposition}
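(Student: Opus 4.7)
My plan is to establish (i) by induction on $m$ and then derive (ii) by conjoining representatives of the finitely many equivalence classes of $m$-sentences true in $\frakA$. Part (i) must of course be understood modulo logical equivalence: otherwise one produces infinitely many syntactically distinct formulae by repeating conjuncts or renaming variables.

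For (i), I would proceed by induction on $m$, uniformly in the finite set $V = \{x_1, \dotsc, x_k\}$ of free variables considered. Because $\sigma$ is finite and relational, the set of atomic $\sigma$-formulae with variables in $V$ is finite; any quantifier-free such formula is then determined up to propositional equivalence by a truth function on this finite set of atoms, which handles the base case $m = 0$. For the induction step, a straightforward structural induction shows that every $\sigma$-formula of quantifier-rank at most $m + 1$ with free variables in $V$ is a Boolean combination of atomic formulae in $V$ and of formulae of the form $\exists y \, \psi$ with $\psi$ of quantifier-rank at most $m$ in free variables $V \cup \{y\}$ (the universal case $\forall y \, \psi$ is absorbed by writing it as $\neg \exists y \, \neg \psi$). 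By the induction hypothesis there are only finitely many such $\psi$ up to equivalence, hence only finitely many quantified building blocks, and Boolean combinations of a finite set of building blocks give rise to only finitely many equivalence classes.

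For (ii), apply (i) with $V = \emptyset$ and let $\chi_1, \dotsc, \chi_N$ be representatives of the (finitely many) equivalence classes of $\sigma$-sentences of quantifier-rank at most $m$. Set
\[ \chi := \bigwedge \{ \chi_j : \frakA \models \chi_j \}, \]
a sentence of quantifier-rank at most $m$ satisfied by $\frakA$. If now $\frakB \models \chi$ and $\phi$ is any sentence of quantifier-rank at most $m$, then $\neg \phi$ also has quantifier-rank at most $m$, so exactly one of $\phi, \neg \phi$ is equivalent to some $\chi_j$ that $\frakA$ satisfies, and hence to a conjunct of $\chi$; this forces $\frakB$ to agree with $\frakA$ on $\phi$. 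Thus $\frakA \equiv_m \frakB$. The converse direction is immediate since each conjunct of $\chi$ is a sentence of quantifier-rank at most $m$ true in $\frakA$.

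I do not anticipate any real obstacle; the only delicate points are the implicit passage to equivalence classes throughout (i) and the need to carry the induction simultaneously over all finite variable contexts $V$ rather than for sentences alone.
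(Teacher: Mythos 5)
The paper states this proposition as a recalled standard fact and gives no proof of its own, so there is nothing to compare against line by line; judged on its own, your argument is correct and is the standard one. You rightly identify the two points where the literal statement needs interpretation -- counting formulae only up to logical equivalence, and working relative to a fixed finite set of free variables -- and your induction on quantifier rank, carried uniformly over finite variable contexts $V$ and using finiteness of the atomic formulae (which is where ``finite relational'' enters), together with the derivation of (ii) as the conjunction of representatives of all rank-$\leq m$ sentences true in $\frakA$, is exactly the textbook route (cf.\ the treatment via Hintikka formulae in Ebbinghaus--Flum, which constructs $\chi$ directly by recursion on $m$ rather than extracting it from (i); the two are interchangeable here). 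The only cosmetic mismatch is that your $\chi$ has quantifier rank at most $m$ rather than exactly $m$, which is harmless for every use made of the proposition.
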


The following lemma is a useful starting point for investigating the behaviour of ultraproducts as limits of their factors. 

\begin{lemma}\label{lemmaUltraproductsElEquivIffMostFactorsMEquiv}
	Let $(\frakA_i)_{i \in I}$ and $(\frakB_i)_{i \in I}$ be two families of $\sigma$-structures and $\calU$ an ultrafilter on $I$. Consider the following two statements:
	\begin{enumerate}
		\item For all $m \in \N$, there are $\calU$-many indices $i \in I$ with $\frakA_i \equiv_m \frakB_i$.
		\item $\prod_i \frakA_i / \calU \equiv \prod_i \frakB_i / \calU$
	\end{enumerate}
	Then the first statement implies the second one. If the signature $\sigma$ is finite and relational, then the converse is also true.
\end{lemma}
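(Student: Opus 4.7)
The plan is to handle the two directions separately and to lean on \loz's theorem together with the structural facts about $\equiv_m$ recalled in the preceding proposition. Throughout I work with an arbitrary first-order sentence $\phi$ and its quantifier rank $m$.

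For the implication (1) $\Rightarrow$ (2), I would fix a sentence $\phi$ of quantifier rank $m$ and show that it holds in one ultraproduct iff it holds in the other. By \loz, $\prod_i \frakA_i / \calU \models \phi$ iff the set $J_\phi := \{ i \in I \colon \frakA_i \models \phi \}$ lies in $\calU$. By hypothesis (1), the set $K_m := \{ i \in I \colon \frakA_i \equiv_m \frakB_i \}$ also lies in $\calU$. On the intersection $J_\phi \cap K_m \in \calU$ both $\frakA_i \models \phi$ and $\frakB_i \models \phi$, so $\{ i \colon \frakB_i \models \phi \} \in \calU$ and, applying \loz\ once more, $\prod_i \frakB_i / \calU \models \phi$. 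Interchanging the roles of $\frakA$ and $\frakB$ gives the full equivalence. This direction needs no restriction on~$\sigma$.

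For the converse, assuming $\sigma$ is finite and relational, I would use part~(1) of the cited proposition: for each fixed $m$ there are only finitely many $\equiv_m$-classes of $\sigma$-structures, and part~(2) gives a sentence $\chi_1, \dotsc, \chi_k$ of quantifier rank $\leq m$ axiomatising each class. The sets $I_j := \{ i \in I \colon \frakA_i \models \chi_j \}$ partition $I$ into finitely many pieces, so by the ultrafilter property exactly one $I_{j_0}$ lies in $\calU$. Analogously, exactly one $I'_{j_1} := \{ i \in I \colon \frakB_i \models \chi_{j_1} \}$ lies in $\calU$. By \loz, $\prod_i \frakA_i / \calU \models \chi_{j_0}$ and $\prod_i \frakB_i / \calU \models \chi_{j_1}$; distinct $\chi_j$ are mutually inconsistent, so the assumed elementary equivalence in~(2) forces $j_0 = j_1$. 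Then $I_{j_0} \cap I'_{j_0} \in \calU$ consists of indices at which $\frakA_i$ and $\frakB_i$ both satisfy $\chi_{j_0}$, hence are $\equiv_m$-equivalent. Since $m$ was arbitrary, (1) follows.

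The first direction is essentially bookkeeping with \loz, and the main conceptual step is in the converse: one needs the finiteness of the collection of $\equiv_m$-classes to conclude from elementary equivalence of the ultraproducts that $\calU$-many factors agree up to quantifier rank $m$. I expect this to be the only genuine obstacle, and it is handled cleanly by the pigeonhole-style argument on the finite $\calU$-partition induced by the axiomatising sentences $\chi_1, \dotsc, \chi_k$. No saturation properties of the ultraproduct are required.
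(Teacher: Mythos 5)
Your proof is correct and follows essentially the same route as the paper: the forward direction is the same chain of $\calU$-set manipulations via \loz's theorem, and the converse uses the same key fact (characteristic sentences of quantifier rank $\leq m$ for $\equiv_m$-classes over a finite relational signature), merely packaged as a pigeonhole argument over the finite partition into $\equiv_m$-classes rather than directly taking the single sentence axiomatising the class of the ultraproducts.
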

\begin{proof}
	We first assume the first statement and prove the second one.
	Let $\phi \in \FO(\sigma)$ and let $m$ be the quantifier-rank of $\phi$. Then
	\begin{align*}
		&{\prod}_i \frakA_i / \calU \models \phi \\
		\iff &\{ i \in I \colon \frakA_i \models \phi \} \in \calU \\
		\iff &\{ i \in I \colon \frakA_i \models \phi \text{ and } \frakA_i \equiv_m \frakB_i \} \in \calU \\
		\iff &\{ i \in I \colon \frakB_i \models \phi \text{ and } \frakA_i \equiv_m \frakB_i \} \in \calU \\
		\iff &\{ i \in I \colon \frakB_i \models \phi\} \in \calU \\
		\iff &{\prod}_i \frakB_i / \calU \models \phi .
	\end{align*}
	
	Now assume conversely that the two ultraproducts $\prod_i \frakA_i / \calU$ and $\prod_i \frakB_i / \calU$ are elementarily equivalent. Let $m \in \N$. If the signature $\sigma$ is finite and relational, there is a formula $\chi$ of quantifier rank at most $m$ which axiomatises the class of all structures which are $m$-equivalent to the ultraproducts. Since the ultraproducts both satisfy $\chi$, there are $\calU$-many indices $i$ such that $\frakA_i \models \chi$ and $\frakB_i \models \chi$, which implies $\frakA_i \equiv_m \frakB_i$.
\end{proof}
\begin{remark}
	If the signature $\sigma$ is infinite, the second implication of the lemma becomes false in general. Take $\sigma = \{ c \} \cup \{ P_i \colon i \in \N \}$ with a constant symbol and a countably infinite number of unary predicate symbols and consider structures $\frakA_n$ and $\frakB_n$, all with universe $\{ 0 \}$, such that the constant symbol is interpreted by $0$ and \[ \frakA_n \models P_i c \iff i \leq 2n \quad \text{and} \quad \frakB_n \models P_i c \iff i \leq 2n + 1. \] Then none of the structures are equivalent even when we only consider quantifier-free formulae, but the ultraproducts \wrt non-principal ultrafilters are isomorphic. 
\end{remark}

In the rest of the section, we assume the signature $\sigma$ to be finite and relational.
It turns out that we can actually get even more than the preceding lemma when we consider the game-theoretic interpretation of elementary equivalence due to Ehrenfeucht and Fra\"issé. We assume that the reader is familiar with the concept of Ehrenfeucht games.
The basic result on these games is the following (\cite[Theorem 2.2.8]{finiteModelTheory}):

\begin{theorem}[Ehrenfeucht]
	The following are equivalent for any $m \in \N$ and two $\sigma$-structures $\frakA$ and $\frakB$:
	\begin{itemize}
		\item The duplicator has a winning strategy for the $m$-round Ehrenfeucht game on $\frakA$ and $\frakB$.
		\item $\frakA \equiv_m \frakB$
	\end{itemize}
\end{theorem}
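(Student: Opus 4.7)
My plan is to prove a strengthened statement by induction on the number of remaining rounds: for each $k \leq m$ and each pair of tuples $\bar a \in A^k$, $\bar b \in B^k$, the duplicator has a winning strategy in the $(m-k)$-round Ehrenfeucht game starting from position $(\bar a; \bar b)$ if and only if $(\frakA, \bar a) \equiv_{m-k} (\frakB, \bar b)$. Taking $k = 0$ recovers the theorem. The base case $\ell := m - k = 0$ says that the empty game is vacuously won by the duplicator iff $\bar a \mapsto \bar b$ is a partial isomorphism, which is in turn equivalent to the two tuples satisfying the same atomic formulae, and hence to $\equiv_0$.

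For the direction ``$\equiv_{\ell+1}$ implies winning strategy'', suppose spoiler plays $a \in A$ (the $\frakB$-side is symmetric). Since $\sigma$ is finite and relational, the preceding proposition yields only finitely many formulae $\phi(\bar x, y)$ of quantifier-rank at most $\ell$ up to logical equivalence; let $\Phi$ be the finite set of such formulae satisfied by $(\bar a, a)$ in $\frakA$. Then $\exists y \bigwedge \Phi(\bar x, y)$ is a formula of quantifier-rank $\leq \ell + 1$ satisfied by $\bar a$ in $\frakA$, so by $(\frakA, \bar a) \equiv_{\ell+1} (\frakB, \bar b)$ we find $b \in B$ with $(\frakA, \bar a, a) \equiv_\ell (\frakB, \bar b, b)$; duplicator responds with this $b$ and the inductive hypothesis hands back a winning continuation. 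For the converse direction, assume duplicator wins the $(\ell+1)$-round game and consider a formula of quantifier-rank at most $\ell + 1$; the nontrivial case is $\exists y \psi(\bar x, y)$. Given a witness $a' \in A$ for this formula at $\bar a$, let spoiler play $a'$; duplicator responds by some $b'$ from which duplicator still wins the remaining $\ell$-round game, so by induction $(\frakA, \bar a, a') \equiv_\ell (\frakB, \bar b, b')$, giving $\frakB \models \psi(\bar b, b')$ and hence $\frakB \models \exists y \psi(\bar b, y)$.

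The main obstacle — and the reason a naive induction on $m$ alone does not work — is that rounds of the game already played must be tracked by extra parameters, which is why the statement must be strengthened to allow for tuples $\bar a, \bar b$ naming earlier moves. Finiteness of the signature enters only in the direction from elementary equivalence to a winning strategy, where it is used to collapse the type-like collection $\Phi$ into a single finite conjunction of bounded quantifier-rank; without this, $\bigwedge \Phi$ would be infinitary and the argument would not produce a genuine first-order formula on which to invoke $\equiv_{\ell+1}$.
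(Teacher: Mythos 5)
Your proof is correct: the strengthening to positions $(\bar a;\bar b)$ and the induction on the number of remaining rounds is the standard Ehrenfeucht--Fra\"iss\'e argument, and you correctly isolate where finiteness of the relational signature is needed (to make $\bigwedge\Phi$ a genuine first-order formula of quantifier rank $\leq \ell$). The paper does not prove this theorem itself but quotes it from the literature, and the quoted textbook proof proceeds exactly along the lines you describe, so there is nothing to compare beyond noting that your argument is the expected one and is complete.
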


As an immediate consequence, the structures $\frakA$ and $\frakB$ are elementarily equivalent iff the duplicator has a winning strategy for an Ehrenfeucht game of any pre-announced length. This is not the same as saying that the duplicator has a strategy for playing ``forever'', \ie playing a countably infinite number of rounds. If we even have his stronger condition of equivalence of $\frakA$ and $\frakB$, we also call the two structures \emph{partially isomorphic}, written $\frakA \simeq_\text{part} \frakB$. 
We will also sometimes use the extension \[ \frakA, a_1, \dotsc, a_n \simeq_\text{part} \frakB, b_1, \dotsc, b_n \] to mean that we can start the infinite Ehrenfeucht game with the elements $a_1, \dotsc, a_n$ and $b_1, \dotsc, b_n$ already selected and the duplicator still wins.

Partial isomorphy also has a characterisation in terms of logical formulae. The right logic to use this time is $L_{\infty\omega}(\sigma)$, the infinitary variant of $ \FO(\sigma)$ in which infinite disjunction and conjunction are allowed. We now get the following result (\cite[Theorem 3.2.7]{finiteModelTheory}):

\begin{theorem}[Karp]
	The following are equivalent for any $m \in \N$ and two $\sigma$-structures $\frakA$ and $\frakB$:
	\begin{itemize}
		\item $\frakA \simeq_\text{part} \frakB$
		\item $\frakA \equiv^{L_{\infty\omega}} \frakB$, \ie for all sentences $\phi \in L_{\infty\omega}(\sigma)$ we have  $\frakA \models \phi \iff \frakB \models \phi$ .
	\end{itemize}
\end{theorem}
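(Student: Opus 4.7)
The plan is to prove Karp's theorem by passing through the standard back-and-forth characterisation of partial isomorphy: $\frakA \simeq_\text{part} \frakB$ holds if and only if there is a non-empty collection $I$ of partial isomorphisms with finite domain from $\frakA$ to $\frakB$ such that for every $p \in I$ and every $a \in A$ there is some $q \supseteq p$ in $I$ with $a \in \mathrm{dom}(q)$, and symmetrically for $B$. Such a collection is exactly a winning strategy for the duplicator in the infinite Ehrenfeucht game, so this is a direct reformulation. I would establish the two implications in this form.

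For the direction $\equiv^{L_{\infty\omega}} \Rightarrow {\simeq_\text{part}}$, I would take $I$ to consist of all maps $\bar a \mapsto \bar b$ between finite tuples of equal length such that $(\frakA, \bar a)$ and $(\frakB, \bar b)$ agree on every $L_{\infty\omega}$-formula in the appropriate free variables. Each such map automatically preserves atomic formulae and their negations, so is a partial isomorphism, and the empty map lies in $I$ by hypothesis. To verify the forth property, suppose $p \in I$ sends $\bar a$ to $\bar b$ but some $a \in A$ admits no matching partner. Then for each $b \in B$ one can (negating if necessary) pick a formula $\phi_b(\bar x, y)$ with $\frakA \models \phi_b(\bar a, a)$ and $\frakB \not\models \phi_b(\bar b, b)$. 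The infinitary formula $\exists y \bigwedge_{b \in B} \phi_b(\bar x, y)$, which lies in $L_{\infty\omega}(\sigma)$ since arbitrary set-sized conjunctions are allowed, then holds at $\bar a$ in $\frakA$ but fails at $\bar b$ in $\frakB$, contradicting $p \in I$. The back property is symmetric.

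For the reverse direction, fix a back-and-forth system $I$ witnessing $\frakA \simeq_\text{part} \frakB$ and prove by structural induction on $\phi \in L_{\infty\omega}(\sigma)$ that whenever $p \in I$ maps $\bar a$ to $\bar b$ one has $\frakA \models \phi(\bar a) \iff \frakB \models \phi(\bar b)$. The atomic case uses that $p$ preserves atomic formulae; the Boolean cases, including arbitrary infinitary conjunctions and disjunctions, pass through directly; the existential case is exactly where the back-and-forth property is invoked, by extending $p$ within $I$ to cover any purported witness. Specialising to sentences (empty tuple with the empty map) then yields $\frakA \equiv^{L_{\infty\omega}} \frakB$.

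The principal obstacle is hidden in the first implication: the argument genuinely needs the full strength of $L_{\infty\omega}$ rather than a restricted infinitary logic such as $L_{\omega_1\omega}$, because the distinguishing conjunction $\bigwedge_{b \in B} \phi_b$ may have uncountably many conjuncts. This is what makes Karp's theorem feel tight, since the logic must be rich enough to encode, for every single element of $B$, a separate reason why it fails to match a given element of $A$.
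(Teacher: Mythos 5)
Your proof is correct and is the standard back-and-forth argument for Karp's theorem; the paper itself does not prove this statement but simply quotes it from the literature (Theorem 3.2.7 of the cited text on finite model theory), so there is nothing to compare against. Two minor points worth making explicit: in the forward direction the conjunction $\bigwedge_{b \in B} \phi_b$ is set-indexed (by the universe $B$), which is exactly why it is a legitimate $L_{\infty\omega}$-formula, as you note; and in the induction for the converse one should recall that, by the usual convention, every $L_{\infty\omega}$-formula has only finitely many free variables, so that each instance of the inductive claim concerns a finite tuple $\bar a$ coverable by a single $p \in I$.
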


\begin{lemma}\label{lemmaUltraproductsPartiallyIsomorphic}
	Let $(\frakA_i)_{i \in \N}$ and $(\frakB_i)_{i \in \N}$ be two sequences of $\sigma$-structures and $\calU$ a non-principal ultrafilter on $\N$ such that for all $m \in \N$ there are $\calU$-many indices $i \in \N$ with $\frakA_i \equiv_m \frakB_i$. Then the ultraproducts $\frakA := \prod_i \frakA_i / \calU$ and $\frakB := \prod_i \frakB_i / \calU$ are partially isomorphic.
\end{lemma}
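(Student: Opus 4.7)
The plan is to combine two facts already in the paper: Lemma \ref{lemmaUltraproductsElEquivIffMostFactorsMEquiv} (the easy direction) yields $\frakA \equiv \frakB$ from the hypothesis, and since the signature $\sigma$ is finite (hence countable), the earlier $\omega_1$-saturation corollary applies to both ultraproducts. The strategy is then a standard back-and-forth that yields a winning strategy for the duplicator in the infinite Ehrenfeucht game on $\frakA$ and $\frakB$, which is the definition of partial isomorphy.

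Concretely, I would consider the family
\[ I := \{ (\bar a, \bar b) \in \textstyle\bigcup_{n \in \N} (A^n \times B^n) : (\frakA, \bar a) \equiv (\frakB, \bar b) \} \]
of finite tuples of equal length realising the same complete first-order type. Elementary equivalence $\frakA \equiv \frakB$ places $(\emptyset, \emptyset)$ in $I$. For the forth step: given $(\bar a, \bar b) \in I$ and $a' \in A$, let $p(x)$ be the complete $1$-type of $a'$ over $\bar a$ in $\frakA$. Replacing the parameters $\bar a$ by $\bar b$ yields a set $p'(x)$ of formulae over $\bar b$ in $\frakB$; for any finite $p'_0 \subseteq p'$, the fact that $(\frakA, \bar a)$ satisfies the corresponding existential statement transfers via $(\frakA, \bar a) \equiv (\frakB, \bar b)$, so $p'$ is finitely realised in $\frakB$. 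The expanded signature with constants for the finitely many parameters $\bar b$ remains countable, so $p'$ has at most countably many formulae and is therefore realised in $\frakB$ by $\omega_1$-saturation; any realisation $b' \in B$ gives $(\bar a a', \bar b b') \in I$. The back step is handled symmetrically using $\omega_1$-saturation of $\frakA$.

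The main obstacle is exactly this step of realising the type in the opposite ultraproduct: the naive attempt to build a witness $b'$ componentwise from the $\frakA_i \equiv_m \frakB_i$ data would be considerably more awkward, and citing $\omega_1$-saturation is what lets the argument proceed cleanly. Once the back-and-forth property of $I$ is in place, the duplicator's strategy in the infinite Ehrenfeucht game is to keep the position in $I$ after each round, which shows $\frakA \simeq_\text{part} \frakB$ (equivalently, via Karp's theorem, $L_{\infty\omega}$-equivalence).
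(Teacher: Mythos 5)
Your proof is correct, but it takes a genuinely different route from the paper's. You factor the statement through two general facts already established: elementary equivalence of the two ultraproducts (the easy direction of Lemma \ref{lemmaUltraproductsElEquivIffMostFactorsMEquiv}) and $\omega_1$-saturation (the corollary to Proposition \ref{propCountableTypesAreRealised}, applicable because $\sigma$ is finite, which is a standing assumption in this section), and then run the standard back-and-forth argument showing that elementarily equivalent $\omega$-saturated structures are partially isomorphic. Your forth step is sound: the transferred type $p'$ is finitely realised because $(\frakA,\bar a)\equiv(\frakB,\bar b)$, it is countable because the signature is, and so it is realised by saturation --- in fact plain $\omega$-saturation suffices, since there are only finitely many parameters. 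The paper instead gives a direct winning strategy for the duplicator in the infinite game on $\frakA$ and $\frakB$ by simulating, in each round, finite Ehrenfeucht games on the component pairs $(\frakA_i,\frakB_i)$: setting $n_i:=\max\{k\le i\colon \frakA_i\equiv_k\frakB_i\}$, the duplicator answers according to a winning strategy for the $n_i$-round game wherever $n_i$ is large enough, so that at every stage she has not lost on $\calU$-many components, which by \loz's theorem is enough. The paper is explicitly aware of your route --- the remark immediately following the lemma notes that the result holds for all $\omega$-saturated structures ``as can readily be seen'' --- but deliberately prefers the componentwise game as a new take on this standard fact. What your approach buys is generality and a clean separation of concerns (the only ultraproduct-specific input is saturation); what the paper's approach buys is a self-contained argument that stays at the level of finite games on the factors and never invokes types or saturation.
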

\begin{proof}
	We give a winning strategy for the duplicator in the unbounded Ehrenfeucht game. The game is played with equivalence classes of sequences. Associating each equivalence class with an arbitrary representative, we can think of the game on $\frakA$ and $\frakB$ to consist of a sequence of games on the $\frakA_i$ and $\frakB_i$. For the duplicator to win the game on the ultraproducts, it suffices not to lose on $\calU$-many components at any given point in time. (For the duplicator to lose the game, there must exist an atomic formula with parameters which exhibits different behaviour on the ultraproduct; by \loz's theorem, this implies losing the game on $\calU$-many components.)
	
	For $i \in \N$ define \[ n_i := \max \{ k \leq i \colon \frakA_i \equiv_k \frakB_i \} .\] By the assumptions on $(\frakA_i)$ and $(\frakB_i)$ and non-principality of the ultrafilter $\calU$, we have $n_i \geq m$ for $\calU$-many indices $i$ when $m$ is fixed. Now the duplicator has the following winning strategy for the game on $\frakA$ and $\frakB$: When the spoiler marks the $(m + 1)$-st element in the game, she interprets this as the $(m + 1)$-st elements in the games on all $\frakA_i$ and $\frakB_i$. For indices $i$ with $n_i > m$, she answers by making a move that wins the $n_i$-round game on $\frakA_i$ and $\frakB_i$. For all other indices $i$, she just makes an arbitrary move. Then she will not have lost for one more round on $\calU$-many factors of the ultraproduct.
\end{proof}

\begin{corollary}
	Let $(\frakA_i)_{i \in \N}$ and $(\frakB_i)_{i \in \N}$ be two sequences of $\sigma$-structures and $\calU$ a non-principal ultrafilter on $\N$. Then the two ultraproducts $\frakA := \prod_i \frakA_i / \calU$ and $\frakB := \prod_i \frakB_i / \calU$ exhibit the same behaviour \wrt the logic $\mathrm{L}_{\infty\omega}$ if they are elementarily equivalent.
\end{corollary}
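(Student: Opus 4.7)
The plan is to chain together the three results that immediately precede this corollary. Assume $\frakA \equiv \frakB$. Since the standing hypothesis of the section gives $\sigma$ finite and relational, the converse direction of Lemma~\ref{lemmaUltraproductsElEquivIffMostFactorsMEquiv} applies and yields that for every $m \in \N$ there are $\calU$-many $i \in \N$ with $\frakA_i \equiv_m \frakB_i$.

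Next, I would feed this conclusion directly into Lemma~\ref{lemmaUltraproductsPartiallyIsomorphic}, whose hypothesis is exactly that. This produces $\frakA \simeq_\text{part} \frakB$. Finally, Karp's theorem transfers partial isomorphy to $L_{\infty\omega}$-equivalence, giving $\frakA \equiv^{L_{\infty\omega}} \frakB$, as desired.

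There is no real obstacle here; the entire substance is absorbed into the earlier lemmas. The only thing worth flagging is the implicit use of the section's standing assumption that $\sigma$ is finite and relational, which is needed both for the nontrivial direction of Lemma~\ref{lemmaUltraproductsElEquivIffMostFactorsMEquiv} (via the axiomatising sentence $\chi$ of a given quantifier rank) and, more subtly, for interpreting moves in the unbounded Ehrenfeucht game component-wise as in the proof of Lemma~\ref{lemmaUltraproductsPartiallyIsomorphic}. The remark following Lemma~\ref{lemmaUltraproductsElEquivIffMostFactorsMEquiv} already shows this assumption cannot simply be dropped, so it is worth noting that the corollary inherits the same limitation.
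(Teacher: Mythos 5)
Your proof is correct and is precisely the intended derivation: the paper leaves this corollary without an explicit proof because it follows by chaining the converse direction of Lemma~\ref{lemmaUltraproductsElEquivIffMostFactorsMEquiv}, then Lemma~\ref{lemmaUltraproductsPartiallyIsomorphic}, then Karp's theorem, exactly as you describe. Your observation that the section's standing assumption of a finite relational signature is essential (and cannot be dropped, per the remark after Lemma~\ref{lemmaUltraproductsElEquivIffMostFactorsMEquiv}) is also accurate.
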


\begin{remark}
	Lemma \ref{lemmaUltraproductsPartiallyIsomorphic} and therefore also the preceding corollary are true for all $\omega$-saturated structures, as can readily be seen. Nevertheless, we consider our proof via a game on pairs of ultraproduct factors an interesting new approach to this standard result. 
\end{remark}

\section{Hanf sequences}

In this section, we assume the signature $\sigma$ to be finite and purely relational. 
We first recall some notions from model theory.

\begin{definition}
	Let $\frakA$ be a $\sigma$-structure with universe $A$.
	\begin{itemize}
		\item The \emph{Gaifman graph} of $\frakA$ is the graph with node set $A$ where there is an edge between two distinct nodes $a, b \in A$ iff there is a relation symbol $R \in \sigma$ and a tuple $\mathbf{a} \in R^\frakA$ such that $a$ and $b$ are elements of $\mathbf{a}$.
		\item Let $m \in \N$ and $a \in A$. The \emph{$m$-ball of $a$}, denoted $S_m(a)$, is the set of all those $a' \in A$ such that the distance between $a$ and $a'$ in the Gaifman graph of $\frakA$ is at most $m$. We also use $S_m(a)$ synonymously with the induced substructure $\frakA \restriction S_m(a)$.
		\item Let $m \in \N$ and $a \in A$. The \emph{$m$-ball type of $a$} is the isomorphism type of the structure $(\frakA \restriction S_m(a), a)$ understood as a substructure of $\frakA$ expanded by an additional constant symbol interpreted as $a$.
	\end{itemize}
\end{definition}

We note that the edge relation in the Gaifman graph is $\FO(\sigma)$-definable and therefore $m$-balls are $\FO(\sigma)$-definable. In connection with the relativisation property of first-order logic, we observe the following:
\begin{observation}
	For every formula $\phi(x) \in \FO_1(\sigma)$ and every $m \in \N$ there is a formula $\psi(x) \in \FO_1(\sigma)$ such that $\frakA, a \models \psi$ iff $S_m(a), a \models \phi$. In particular, we note that characteristic formulae axiomatising structures up to $n$-equivalence or up to isomorphism (for finite structures) can be relativised to $m$-balls in this manner.
\end{observation}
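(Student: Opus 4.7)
The plan is to handle the two definability claims separately: first define the $m$-ball relation in first-order logic, then relativise $\phi$ to this relation by structural induction.

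For the first step, I observe that since $\sigma$ is finite, the Gaifman edge relation is definable by a formula
\[ E(x, y) := x \neq y \land \bigvee_{R \in \sigma} \exists z_1 \dotsb z_{\mathrm{ar}(R)} \left( R(z_1, \dotsc, z_{\mathrm{ar}(R)}) \land \bigvee_j x = z_j \land \bigvee_j y = z_j \right) . \]
Concatenating at most $m$ such edges (allowing trivial steps $z_i = z_{i+1}$ so that shorter paths are handled uniformly) yields a formula $\delta_m(x, y) \in \FO_2(\sigma)$ with
\[ \frakA, a, b \models \delta_m \iff b \in S_m(a). \]

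For the second step, I would define the relativisation $\phi^{\delta_m}(x)$ of a formula $\phi(x, y_1, \dotsc, y_n) \in \FO(\sigma)$ (with $x$ as the ``basepoint'' variable) recursively:
\begin{itemize}
    \item If $\phi$ is atomic, set $\phi^{\delta_m} := \phi$.
    \item Boolean connectives commute with the operation: $(\neg \phi)^{\delta_m} := \neg \phi^{\delta_m}$ and $(\phi_1 \land \phi_2)^{\delta_m} := \phi_1^{\delta_m} \land \phi_2^{\delta_m}$.
    \item For quantifiers, restrict the quantified variable to lie in the $m$-ball of $x$: $(\exists y \, \phi)^{\delta_m} := \exists y \, (\delta_m(x, y) \land \phi^{\delta_m})$ and dually for $\forall$.
\end{itemize}
A routine induction on the structure of $\phi$ then shows that for any tuple $b_1, \dotsc, b_n \in S_m(a)$ we have $\frakA, a, b_1, \dotsc, b_n \models \phi^{\delta_m}$ iff $S_m(a), a, b_1, \dotsc, b_n \models \phi$; in particular, the desired $\psi(x) := \phi^{\delta_m}(x)$ works for $\phi \in \FO_1(\sigma)$, since $a$ itself always lies in $S_m(a)$. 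The atomic case is where relationality of $\sigma$ enters: a relation $R(\bar b)$ holds in the induced substructure $S_m(a)$ precisely when it holds in $\frakA$, because induced substructures of relational structures inherit relations verbatim. No step poses a real obstacle; the only point that deserves slight care is making sure that $\delta_m$ is written so as to cover distances strictly less than $m$ as well (hence the trivial-step trick above), and that the induction hypothesis is stated for tuples of parameters rather than for sentences, so that the quantifier case goes through.

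Finally, the ``in particular'' clause is immediate: the characteristic sentences for $\equiv_m$-classes (and for isomorphism types of finite structures) are themselves first-order formulae, so applying the relativisation operation above produces formulae $\psi(x)$ that assert ``$S_m(x)$ satisfies the given characteristic sentence'', which is the desired local version.
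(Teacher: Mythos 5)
Your proposal is correct and is exactly the argument the paper has in mind: it states the observation on the strength of the remark that the Gaifman edge relation (and hence each $m$-ball) is $\FO(\sigma)$-definable, combined with the standard relativisation property of first-order logic, which is precisely what you spell out via $\delta_m$ and the syntactic relativisation $\phi^{\delta_m}$. Your attention to the two delicate points --- padding $\delta_m$ with trivial steps to capture distance $\leq m$, and the fact that induced substructures of relational structures make the atomic case trivial --- is exactly where the care is needed.
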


Furthermore we observe that if the Gaifman graph of a structure is disconnected, there is no relation that connects elements of different connected components. When we investigate homomorphisms and isomorphisms and also game strategies, we can therefore usually focus on one connected component:

\begin{observation}\label{observationIsomorphismFromConnComps}
	Let $\frakA$ and $\frakB$ be two $\sigma$-structures. If we can find a bijection between the connected components of the Gaifman graphs of $\frakA$ and $\frakB$ such that associated components are isomorphic, then we can construct an isomorphism of $\frakA$ and $\frakB$ by joining isomorphisms of the connected components.
\end{observation}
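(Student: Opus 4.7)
The plan is to show that the claimed isomorphism is obtained simply by gluing the given componentwise isomorphisms; the content of the observation is essentially that nothing is lost by cutting $\frakA$ and $\frakB$ into Gaifman components because no tuple of any relation crosses between components.

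First I would record the following basic fact: if $R \in \sigma$ is a $k$-ary relation symbol and $\mathbf{a} = (a_1, \dotsc, a_k) \in R^\frakA$, then any two entries $a_j, a_{j'}$ with $a_j \neq a_{j'}$ are adjacent in the Gaifman graph of $\frakA$ by definition, so all entries of $\mathbf{a}$ lie in a single connected component of the Gaifman graph. The same is true in $\frakB$.

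Now let $\beta$ be the given bijection between the set of Gaifman components of $\frakA$ and that of $\frakB$, and for each component $C$ of $\frakA$ let $f_C \colon \frakA \restriction C \to \frakB \restriction \beta(C)$ be an isomorphism. Since the Gaifman components partition $A$ (resp.\ $B$), I can define a map $f \colon A \to B$ by $f(a) := f_C(a)$ where $C$ is the unique component of $\frakA$ containing $a$. Bijectivity of $f$ is then immediate from bijectivity of each $f_C$ together with the fact that $\beta$ is a bijection of components and the $\beta(C)$ partition $B$.

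To check that $f$ is an isomorphism, I would verify preservation of each relation $R \in \sigma$ in both directions. Given $\mathbf{a} \in R^\frakA$, the preliminary fact above places all entries of $\mathbf{a}$ in a single component $C$; then $f(\mathbf{a}) = f_C(\mathbf{a}) \in R^{\frakB \restriction \beta(C)} \subseteq R^\frakB$ because $f_C$ is an isomorphism of induced substructures. Conversely, given $\mathbf{b} \in R^\frakB$, its entries lie in one component $D = \beta(C)$ of $\frakB$, so $\mathbf{b} = f_C(\mathbf{a})$ for a unique tuple $\mathbf{a}$ in $C$, and $\mathbf{a} \in R^\frakA$ follows from $f_C^{-1}$ being an isomorphism. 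No step is a genuine obstacle; the only point that needs care is the reliance on the fact that a relational tuple lies entirely in one Gaifman component, which is where the hypothesis that $\sigma$ is purely relational (assumed at the start of this section) gets used.
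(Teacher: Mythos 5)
Your proof is correct and fills in exactly the argument the paper leaves implicit: the key point, that every tuple of a relation has pairwise Gaifman-adjacent entries and hence lies in a single connected component, is precisely the observation stated in the sentence preceding this one in the paper. Nothing further is needed.
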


Gaifman graphs can be used to phrase the following standard result (originally in \cite{hanf}, quoted as in \cite[Theorem 2.4.1]{finiteModelTheory}):
\begin{theorem}[Hanf]
	Let $\frakA$ and $\frakB$ be $\sigma$-structures and let $m \in \N$. Suppose that for some $e \in \N$ all $3^m$-balls in $\frakA$ and $\frakB$ have less than $e$ elements and that for each $3^m$-ball type $\iota$, one of the following conditions holds:
	\begin{enumerate} 
		\item Both $\frakA$ and $\frakB$ have the same number of elements of $3^m$-ball type $\iota$.
		\item Both $\frakA$ and $\frakB$ have more than $m \cdot e$ elements of $3^m$-ball type $\iota$.
	\end{enumerate}
	Then the duplicator has a winning strategy in the $m$-round Ehrenfeucht game on $\frakA$ and $\frakB$, \ie $\frakA \equiv_m \frakB$.
\end{theorem}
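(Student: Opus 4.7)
The approach is a classical back-and-forth argument for the $m$-round Ehrenfeucht game, with radii of local isomorphism shrinking by a factor of $3$ each round. Set $r_j := 3^{m-j}$ for $j = 0, \ldots, m$. The invariant I would maintain after round $j$, with selected elements $(a_1, \ldots, a_j)$ in $\frakA$ and $(b_1, \ldots, b_j)$ in $\frakB$, is that the map $a_i \mapsto b_i$ extends to an isomorphism between the induced substructures on $\bigcup_i S_{r_j}(a_i)$ and $\bigcup_i S_{r_j}(b_i)$. Since $r_m = 1$, after round $m$ this local isomorphism captures all atomic relations among the selected tuples and so certifies the duplicator's win.

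For the inductive step, suppose the spoiler plays $a_{j+1} \in \frakA$ (the other case is symmetric). If $a_{j+1}$ lies within distance $2 r_{j+1}$ of some already-selected $a_i$ -- the \emph{local} case -- then $S_{r_{j+1}}(a_{j+1}) \subseteq S_{r_j}(a_i)$ since $3 r_{j+1} = r_j$, and the duplicator just reads off $b_{j+1}$ from the existing isomorphism. Otherwise -- the \emph{fresh} case -- the $r_{j+1}$-ball around $a_{j+1}$ is disjoint from the earlier $r_{j+1}$-balls, and by Observation \ref{observationIsomorphismFromConnComps} any new isomorphism between $S_{r_{j+1}}(a_{j+1})$ and $S_{r_{j+1}}(b_{j+1})$ can be adjoined to the existing one, provided $b_{j+1}$ has the same $3^m$-ball type $\tau$ as $a_{j+1}$ -- which determines its $r_{j+1}$-ball type because $r_{j+1} \leq 3^m$ -- and sits at distance $> 2 r_{j+1}$ from every $b_i$.

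Producing such a $b_{j+1}$ is where the hypothesis comes in. The forbidden zone in $\frakB$ contains at most $j(e-1) < m \cdot e$ elements, since $2 r_{j+1} \leq 3^m$ and $3^m$-balls have fewer than $e$ elements. In the large-count case -- both structures have more than $m \cdot e$ elements of type $\tau$ -- some type-$\tau$ element of $\frakB$ lies outside the forbidden zone by sheer counting. The equal-count case is the main obstacle: here one must argue that the forbidden zones on the two sides contain equally many type-$\tau$ elements, so that the presence of the free witness $a_{j+1}$ in $\frakA$ produces a free witness in $\frakB$. I would handle this by strengthening the invariant so that the local isomorphism also preserves $3^m$-ball types of points in its domain -- for instance, by carrying a parallel type-preserving bijection on each small-count type $\tau$, pre-committed at the start of the argument and updated compatibly in each fresh step, so that a type-preserving choice of $b_{j+1}$ is always available.
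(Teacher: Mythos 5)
Your overall architecture --- the factor-of-$3$ back-and-forth maintaining an isomorphism of $r_j$-neighbourhoods with $r_j = 3^{m-j}$ --- is the standard proof of Hanf's theorem (the paper itself only quotes the result from Ebbinghaus--Flum and gives no proof, so there is nothing to compare against except the textbook argument). However, the step you yourself flag as ``the main obstacle'', the equal-count case, is precisely where your write-up has a genuine gap, and neither fix you sketch works as stated. The proposed strengthened invariant --- that the local isomorphism $\pi$ between $\bigcup_i S_{r_j}(a_i)$ and $\bigcup_i S_{r_j}(b_i)$ preserve $3^m$-ball types of \emph{all} points in its domain --- cannot be maintained: for a point $x$ of the domain, $S_{3^m}(x)$ is in general not contained in the domain of $\pi$ (already $r_j < 3^m$ for $j \geq 1$), so even choosing $b_{j+1}$ with the same $3^m$-ball type as $a_{j+1}$ gives no control over the $3^m$-ball types of the other points of $S_{r_{j+1}}(b_{j+1})$, and in the local case, where $b_{j+1} = \pi(a_{j+1})$ is forced, you cannot guarantee type preservation at all. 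The ``pre-committed parallel bijection'' is a restatement of what must be proved, namely that the forbidden zone in $\frakB$ cannot contain all $c$ witnesses of type $\tau$ while the forbidden zone in $\frakA$ contains fewer.

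The standard resolution is to count the \emph{coarser} $r_{j+1}$-ball types instead of $3^m$-ball types, using two observations. First, the hypothesis transfers from $3^m$-ball types to $r$-ball types for every $r \leq 3^m$: the number of realisations of a given $r$-ball type is the sum, over the finitely many $3^m$-ball types refining it, of the numbers of their realisations, so either all summands agree in $\frakA$ and $\frakB$ (hence equal totals) or some summand exceeds $m \cdot e$ in both (hence both totals large). Second, $\pi$ \emph{automatically} preserves the $r_{j+1}$-ball type of every $x$ with $d(x, \bar a) \leq 2 r_{j+1}$: a geodesic of length $\leq r_{j+1}$ from such an $x$ stays inside $S_{3 r_{j+1}}(\bar a) = S_{r_j}(\bar a)$, so $S_{r_{j+1}}(x)$ computed in $\frakA$ coincides with the ball computed in the induced substructure and is carried isomorphically onto $S_{r_{j+1}}(\pi(x))$. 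Hence the two forbidden zones contain equally many elements of the $r_{j+1}$-ball type of $a_{j+1}$, and since $a_{j+1}$ lies outside the zone in $\frakA$ while the totals agree, a free witness of that $r_{j+1}$-ball type exists in $\frakB$; no strengthened invariant is needed. A secondary, fixable defect: with your case split at distance $2 r_{j+1}$ versus $> 2 r_{j+1}$, the fresh case still admits a relation tuple meeting both $S_{r_{j+1}}(a_{j+1})$ and $S_{r_{j+1}}(\bar a)$ when the distance is exactly $2 r_{j+1} + 1$, so the union of the two partial isomorphisms need not be an isomorphism of the induced substructure; one should split at $2 r_{j+1} + 1$ and take $r_j = 3 r_{j+1} + 1$, which keeps all radii below $3^m$ so the counting hypothesis still applies.
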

The theorem is usually proved by explicitly giving the winning strategy. It is a useful tool for proving first-order inexpressibility of some property: it suffices to find two sequences $(\frakA_n)_n$ and $(\frakB_n)_n$ such that $\frakA_n$ is $n$-Hanf equivalent to $\frakB_n$ but the property in question is present in exactly one of the two structures.
 
We now want to present a new approach to this technique for proving inexpressibility. Our requirements will be similar to those of Hanf's theorem, but we will use ultraproducts instead of games.

\begin{definition}
	A sequence of $\sigma$-structures $(\frakA_i)_{i \in \N}$ such that each of the structures has cardinality $\leq 2^\omega$ is called a \emph{Hanf sequence} if the following conditions are satisfied:
	\begin{enumerate}
		\item For every $m \in \N$ there is an $e \in \N$ such that every $m$-ball in each of the structures contains at most $e$ elements.
		\item For every $m \in \N$ and every $m$-ball type $\iota$, one of the following two conditions holds:
			\begin{enumerate}
				\item For some $c \in \N$ there is an $i_0 \in \N$ such that for all $i \geq i_0$, $\frakA_i$ has exactly $c$ elements of $m$-ball type $\iota$.
				\item For every $c \in \N$ there is an $i_0 \in \N$ such that for all $i \geq i_0$, $\frakA_i$ contains at least $c$ elements of $m$-ball type $\iota$.
			\end{enumerate}
	\end{enumerate} 
\end{definition}

It is useful to compare the conditions for a Hanf sequence to the requirements on two structures for Hanf's Theorem: The first condition is also required there for one $m = 3^n$. The second condition in both cases can loosely be read as ``The number of elements of a specific $m$-ball type is either large or equal in all structures considered''.

\begin{remark}
	Both conditions can be rephrased. It is easy to see that for the first condition it is both necessary and sufficient to have a uniform bound for the degrees in the Gaifman graphs of all $\frakA_i$.
	
	The second condition can be rewritten in more abstract terms: the sequence $(n^\iota_i)_i$ given by 
	\[
		n^\iota_i := n^\iota(\frakA_i) := \abs{\{ a \in \frakA_i \colon \text{ $a$ has $m$-ball type $\iota$ } \}} \in \N \cup \{ \infty \}
	\]
	must converge in the topological space $\N \cup \{ \infty \}$ (given as the Alexandroff compactification of the discrete space $\N$). We will later see how we can use other notions of convergence.
\end{remark}

Our main aim for this section is the following theorem:

\begin{theorem}\label{theoremHanfSequences}
	Let $(\frakA_i)_{i \in \N}$ be a Hanf sequence. Let furthermore $\calU_1$ and $\calU_2$ be two non-principal ultrafilters on $\N$. Then the ultraproducts $\frakA^{\calU_1} := \prod_i \frakA_i / \calU_1$ and $\frakA^{\calU_2} := \prod_i \frakA_i / \calU_2$ are isomorphic.
\end{theorem}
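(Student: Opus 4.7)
The overall plan is to show that $\frakA^{\calU_1}$ and $\frakA^{\calU_2}$ decompose into Gaifman-connected components in matching ways and then invoke Observation \ref{observationIsomorphismFromConnComps} to glue component-wise isomorphisms into a global one. By \loz's theorem applied to the first Hanf condition, every $m$-ball in $\frakA^{\calU_j}$ has at most $e_m$ elements, so every connected component is at most countable. By the corollary following Proposition \ref{propCountableTypesAreRealised} both ultraproducts are $\omega_1$-saturated. Furthermore they are elementarily equivalent: for any sentence $\phi$ of quantifier rank $m$, pairwise application of Hanf's theorem to the $\frakA_i$ combined with the Hanf conditions on $3^m$-ball counts yields $\frakA_i \equiv_m \frakA_j$ for all $i,j$ sufficiently large, so the truth set $\{i : \frakA_i \models \phi\}$ is cofinite or finite and its membership in any non-principal ultrafilter is independent of the ultrafilter.

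The first key step is a back-and-forth lemma: if $a \in \frakA^{\calU_1}$ and $b \in \frakA^{\calU_2}$ realise the same complete $1$-type over $\emptyset$, then there is a $\sigma$-isomorphism between their connected components sending $a$ to $b$. Enumerating both at-most-countable components, I would alternately extend a partial elementary map by one element, using $\omega_1$-saturation of the target to realise the required type, which is countable over only finitely many parameters. Each chosen element automatically lies in the correct component because a first-order formula bounding the Gaifman distance to the marked element belongs to the type being preserved.

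The second key step is counting components of each connected isomorphism type $\tau$. For any complete $1$-type $p$ over $\emptyset$, Proposition \ref{propCountableTypesAreRealised} gives exactly $m_{\text{max}}(p)$ or $2^\omega$ realisations, and $m_{\text{max}}(p)$ is determined by which first-order sentences $\exists^{\geq m} x \bigwedge \Phi_0(x)$ hold in the ambient structure, so it agrees between the two ultraproducts by elementary equivalence. Fix $\tau$ and suppose it occurs as some component; pick $a$ in a $\tau$-component and let $p$ be the $1$-type of $a$. By Step 1 every realisation of $p$ lies in a $\tau$-component, and the number $k(p) \in \N \cup \{\omega\}$ of elements of $1$-type $p$ inside any single $\tau$-component is an invariant of $(\tau,p)$ (distinct isomorphic components can be swapped by an automorphism of the ultraproduct). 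If some such $p$ is finitely realised then $k(p)$ must be finite and the number of $\tau$-components equals $m_{\text{max}}(p)/k(p)$; otherwise every such $p$ has $2^\omega$ realisations and, since $k(p) \leq \omega$, there are $2^\omega$ many $\tau$-components. Either way the count is the same in both ultraproducts, and occurrence of $\tau$ in one ultraproduct forces occurrence in the other via elementary equivalence and Step 1.

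With matching $\tau$-component counts for every $\tau$, I fix an isomorphism-respecting bijection between the components of the two ultraproducts, lift each matched pair to a $\sigma$-isomorphism via Step 1, and invoke Observation \ref{observationIsomorphismFromConnComps} to assemble the desired global isomorphism $\frakA^{\calU_1} \cong \frakA^{\calU_2}$. The main obstacle I anticipate is the bookkeeping in Step 2 — in particular verifying that $k(p)$ is an intrinsic invariant of $(\tau,p)$ and that the isolated case really yields a finite $k(p)$ — which is where the $\omega_1$-saturation provided by Proposition \ref{propCountableTypesAreRealised} is decisive and lets us avoid the continuum hypothesis used in Corollary \ref{corollaryElemEquivUltraprodsAreIsomAssumingCH}.
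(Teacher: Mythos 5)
Your proof is correct in substance and shares the paper's skeleton (decompose both ultraproducts into Gaifman components, show each component isomorphism type occurs equally often, glue via Observation \ref{observationIsomorphismFromConnComps}), but it differs at two pivots. First, you obtain elementary equivalence of $\frakA^{\calU_1}$ and $\frakA^{\calU_2}$ by applying Hanf's theorem pairwise to the factors, and then derive the matching of realization counts from elementary equivalence together with $\omega_1$-saturation and Proposition \ref{propCountableTypesAreRealised}. The paper never establishes elementary equivalence first: it matches the number of occurrences of each $\omega$-ball type \emph{directly} from the Hanf-sequence conditions on $m$-ball-type counts (this is the content of Lemma \ref{lemmaForHanfSequences}), which keeps the argument free of Hanf's theorem and hence of games --- one of the paper's stated aims --- and also yields the stronger statement for two \emph{different} sequences whose counts converge to the same limits along the respective ultrafilters. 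Second, your component-matching lemma is a saturation-driven back-and-forth between countable components (essentially an upgraded form of Lemma \ref{lemmaElementaryEquivalenceOfOmegaBalls}), whereas the paper gets the same conclusion from K\"onig's lemma applied to the finite $m$-balls (Lemma \ref{lemmaIsomorphyOfOmegaBalls}), which is more elementary and makes no use of saturation at that step. One point you should make explicit: the invariance of $k(p)$ \emph{across} the two ultraproducts is not covered by the automorphism-swapping argument (which only works inside a single structure); it holds because your Step 1 map is a union of partial \emph{elementary} maps between the ambient ultraproducts and therefore carries realizations of $p$ to realizations of $p$. The paper sidesteps this bookkeeping entirely by counting occurrences of $\omega$-ball types rather than of complete $1$-types, noting that a fixed $\omega$-ball type occurs at most countably often per component and only in components of one isomorphism type.
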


To prove this theorem we will first look at the connected components of the Gaifman graphs of the two ultraproducts.

The connected component of an element $a$ in the Gaifman graph of a $\sigma$-structure can be written as $\bigcup_{i \in \N} S_i(a)$. We will also suggestively denote it by $S_\omega(a)$ (the \emph{$\omega$-ball of $a$}). The isomorphism type of $(S_\omega(a), a)$ will be the $\omega$-ball type of $a$. 
We note that the $\omega$-ball type of $a$ encodes the full isomorphism type of the connected component of $a$ and additionally the position of $a$ in this connected component.

\begin{lemma}\label{lemmaIsomorphyOfOmegaBalls}
	Let $\frakA$ and $\frakB$ be $\sigma$-structures. Let $a \in A$, $b \in B$ such that for each $m \in \N$ we have \[ (\frakA \restriction S_m(a), a) \simeq (\frakB \restriction S_m(b), b) \] and each of these $m$-balls is finite. Then \[ (\frakA \restriction S_\omega(a), a) \simeq (\frakB \restriction S_\omega(b), b), \] \ie $a$ and $b$ have the same $\omega$-ball type.
\end{lemma}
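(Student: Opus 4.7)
My plan is to apply König's lemma to a tree of partial isomorphisms between the finite balls. For each $m \in \N$, let $T_m$ denote the set of isomorphisms $(\frakA \restriction S_m(a), a) \to (\frakB \restriction S_m(b), b)$. By hypothesis each $T_m$ is non-empty, and since both $m$-balls are finite, $T_m$ is a finite set. I assemble the $T_m$ into a tree $T$ by declaring $f \in T_m$ to be the parent of $g \in T_{m+1}$ precisely when $g \restriction S_m(a) = f$. For this to make sense I need to check that any $g \in T_{m+1}$ restricts to an element of $T_m$: since the Gaifman distance is defined purely in terms of the $\sigma$-structure, $g$ preserves distances to the distinguished point $a$, hence maps $S_m(a)$ bijectively onto $S_m(b)$; its restriction is then an isomorphism of the induced substructures sending $a$ to $b$, i.e., an element of $T_m$.

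With this setup, $T$ is an infinite, finitely branching tree with non-empty levels, so König's lemma furnishes an infinite branch $(f_m)_{m \in \N}$ satisfying the coherence condition $f_{m+1} \restriction S_m(a) = f_m$. I then define $f \colon S_\omega(a) \to S_\omega(b)$ by setting $f(x) := f_m(x)$ for any $m$ large enough that $x \in S_m(a)$; this is well-defined by coherence and maps $a$ to $b$. Every $y \in S_\omega(b)$ lies in some $S_m(b)$ and thus in the image of $f_m$, so $f$ is surjective; injectivity follows because injectivity of $f$ on any finite subset reduces to injectivity of some $f_m$. Preservation of every relation $R \in \sigma$ is immediate: any tuple in $S_\omega(a)$ lies entirely in $S_m(a)$ for $m$ large enough, so $R$-membership is tested by $f_m$, which is an isomorphism.

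The only genuinely delicate step is the restriction claim underlying the tree structure, which rests on the fact that Gaifman balls are $\FO(\sigma)$-definable and hence isomorphism-invariants of pointed $\sigma$-structures. After that observation the argument is routine, because finiteness of each $S_m$ is exactly what is needed to make König's lemma applicable; without it, there might be infinitely many extensions of a given partial isomorphism, and one could not guarantee a coherent branch.
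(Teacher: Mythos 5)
Your proof is correct and follows essentially the same route as the paper's: König's lemma applied to the finitely branching tree of isomorphisms of the finite $m$-balls ordered by extension, with the union of an infinite branch giving the desired isomorphism of $\omega$-balls. You are in fact slightly more careful than the paper in explicitly verifying that an isomorphism of the $(m+1)$-balls restricts to one of the $m$-balls (so that the levels really form a tree), a point the paper passes over with ``we naturally get a tree.''
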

\begin{proof}
	The statement can be seen as an application of König's lemma: consider the set \[ M := \{ p \colon S_m(a) \to S_m(b) \colon m \in \N, \text{$p$ is an isomorphism of $(S_m(a), a)$ and $(S_m(b), b)$} \} .\] We establish a graph structure on $M$ by putting an edge between maps $p \colon S_m(a) \to S_m(b)$ and $q \colon S_{m + 1}(a) \to S_{m + 1}(b)$ iff $q$ is an extension of $p$. Arranging the vertices in $M$ in levels by the number $m$, we naturally get a tree with root $S_0(a) \to S_0(b), a \mapsto b$. (The assumption on isomorphy of the $m$-balls of $a$ and $b$ guarantees that each level is non-empty.) Since every $m$-ball is finite, there are only finitely many maps $S_m(a) \to S_m(b)$ in each level and hence every vertex has finite degree. By König's lemma, there must exist an infinite path of vertices, \ie a chain $(p_i)_{i \in \N}$ where $p_i \colon S_i(a) \to S_i(b)$ is an isomorphism and $p_{i + 1}$ is an extension of $p_i$. We can therefore form the limit $p := \bigcup_i p_i$ and easily convince ourselves that this is an isomorphism from $(S_\omega(a), a)$ to $(S_\omega(b), b)$.
\end{proof}

\begin{remark}
	Without the assumption about finiteness of the $m$-balls, the statement of the lemma can become false. In the signature of graphs consider a tree consisting of infinitely many disjoint finite paths of unbounded length starting from the root. Adding one path of infinite length then changes the isomorphism type, but does not change the isomorphism type of the $m$-balls of the root. 
\end{remark}

\begin{lemma}\label{lemmaForHanfSequences}
	Let $(\frakA_i)_{i \in \N}$ and $(\frakB_i)_{i \in \N}$ be two sequences of $\sigma$-structures, each of cardinality at most $2^\omega$, such that for every $m \in \N$ there is an $e \in \N$ such that every $m$-ball in any of the structures contains at most $e$ elements. Let furthermore $\calU_1, \calU_2$ be two non-principal ultrafilters on $\N$ such that for each $m \in \N$ and every $m$-ball type $\iota$, one of the following holds:
	\begin{enumerate}
		\item[(a)] There is a $c \in \N$ such that
			\[ \{ i \in \N \colon n^\iota(\frakA_i) = c \} \in \calU_1 \quad \text{and} \quad \{ i \in \N \colon n^\iota(\frakB_i) = c \} \in \calU_2 .\]  
		\item[(b)] For every $c \in \N$ it holds that
			\[ \{ i \in \N \colon n^\iota(\frakA_i) > c \} \in \calU_1 \quad \text{and} \quad \{ i \in \N \colon n^\iota(\frakB_i) > c \} \in \calU_2 .\]
	\end{enumerate}
	Then the ultraproducts $\frakA_\infty := \prod_i \frakA_i / \calU_1$ and $\frakB_\infty := \prod_i \frakB_i / \calU_2$ are isomorphic.
\end{lemma}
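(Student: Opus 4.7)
The plan is to reduce the isomorphism question to matching connected components via Observation~\ref{observationIsomorphismFromConnComps}, and to establish that for every isomorphism type $\sigma$ of a connected $\sigma$-structure, the number of components isomorphic to $\sigma$ agrees in $\frakA_\infty$ and $\frakB_\infty$ under the identification ``both finite and equal, or both of cardinality $2^\omega$''. Both ultraproducts have cardinality at most $(2^\omega)^\omega = 2^\omega$, so the ``$\geq 2^\omega$'' alternative will always mean exactly $2^\omega$.

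First, I would observe that since each $m$-ball contains at most $e = e(m)$ elements in every factor (and hence, by \loz, in the ultraproducts), the property ``$a$ has $m$-ball type $\iota$'' is expressible by an $\FO(\sigma)$ formula $\phi_\iota(x)$, and ``there are exactly $c$ elements of $m$-ball type $\iota$'' by a further first-order sentence. Combining \loz's theorem with the case distinction in the hypothesis, for every $m$-ball type $\iota$ we obtain: either $n^\iota(\frakA_\infty) = n^\iota(\frakB_\infty) = c$ (case (a)), or $n^\iota > c$ for every $c$ in both ultraproducts (case (b)); in the second case, Proposition~\ref{propCountableTypesAreRealised} applied to the singleton type $\{\phi_\iota\}$, for which condition $(\ast)$ holds, gives $n^\iota \geq 2^\omega$ in both ultraproducts.

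Next, I would lift this to $\omega$-ball types. For any pointed $\omega$-ball type $\tau$ realised in $\frakA_\infty$, the countable set $\Phi_\tau := \{\phi_{\tau|_m} : m \in \N\}$ is finitely realised in $\frakA_\infty$ and, by the previous step, in $\frakB_\infty$ as well. Applying Proposition~\ref{propCountableTypesAreRealised} to $\Phi_\tau$ in each ultraproduct, the number of realisations is $\geq 2^\omega$ whenever condition $(\ast)$ holds, that is, whenever every $n^{\tau|_m} \geq 2^\omega$; otherwise it is the finite value $\min_m n^{\tau|_m}$. By step~1 this quantity is the same for $\frakA_\infty$ and $\frakB_\infty$, so the number of elements of $\omega$-ball type $\tau$ matches across the two ultraproducts (both finite and equal, or both $= 2^\omega$).

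Finally, I would convert these element counts into component counts. Since $1$-balls are uniformly bounded, the Gaifman graphs of both ultraproducts have bounded degree, so every connected component is countable. For a connected unpointed isomorphism type $\sigma$, the $\omega$-ball types $\tau$ with underlying structure $\sigma$ are in bijection with the orbits of $\mathrm{Aut}(\sigma)$ on $|\sigma|$, and writing $O_\tau$ for the orbit of $\tau$ one has $n^\tau = |O_\tau| \cdot N_\sigma$, where $N_\sigma$ counts the components of type $\sigma$. Since $|O_\tau| \leq |\sigma| \leq \omega$, equality of $n^\tau$ forces equality of $N_\sigma$: in the finite case by division (noting $|O_\tau|$ is then finite too), and in the $2^\omega$ case because $|O_\tau| \cdot N_\sigma = 2^\omega$ with $|O_\tau| \leq \omega$ forces $N_\sigma = 2^\omega$ by cardinal arithmetic. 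With matching component counts for every isomorphism type, Observation~\ref{observationIsomorphismFromConnComps} assembles component-wise isomorphisms into an isomorphism $\frakA_\infty \simeq \frakB_\infty$. The most delicate step is the bookkeeping between pointed $\omega$-ball types and unpointed component types and the associated cardinal arithmetic, ensuring that the ``$\geq 2^\omega$'' alternative transfers cleanly from $m$-ball types through $\omega$-ball types to component counts.
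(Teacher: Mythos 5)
Your proposal is correct and follows essentially the same route as the paper's own proof: transfer the $m$-ball-type counts to the ultraproducts via \loz, use Proposition~\ref{propCountableTypesAreRealised} (together with Lemma~\ref{lemmaIsomorphyOfOmegaBalls}) to show each $\omega$-ball type is realised either a matching finite number of times or exactly $2^\omega$ times in both structures, convert this to matching counts of isomorphism types of connected components, and conclude with Observation~\ref{observationIsomorphismFromConnComps}. Your orbit-counting step is just a more explicit version of the paper's remark that each occurrence of a component type corresponds to a fixed (countable) number of occurrences of a given $\omega$-ball type.
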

\begin{proof}
	For every $m \in \N$, there is a bound on the size of $m$-balls in $\frakA_\infty$ and $\frakB_\infty$ (The property of having $m$-balls with at most $e$ elements is first-order definable.) Therefore each $\omega$-ball in $\frakA_\infty$ is countable. Furthermore, the set of elements of $\frakA_\infty$ of specific $\omega$-ball type is given by a countable type -- by the last lemma we simply have to prescribe all $m$-ball types and each $m$-ball type is an isomorphism type of a finite structure and therefore axiomatisable by a single formula. By Proposition \ref{propCountableTypesAreRealised} this means that the number of occurrences of every $\omega$-ball type is either finite or exactly $2^\omega$. (Since $\abs{\frakA_\infty} \leq 2^\omega$, more occurrences are impossible.) If some $m$-ball subtype $\iota'$ of an $\omega$-ball type $\iota$ is realised exactly $c \in \N$ times in $\calU_1$-many $\frakA_i$, then $\iota'$ will be realised $c$ times in $\frakA_\infty$. But if this is not the case for any $c$ and any subtype of $\iota$, then there will be $2^\omega$ occurrences of $\iota$. We especially note that the number of occurrences of $\iota$ is the same in $\frakA_\infty$ and $\frakB_\infty$.
	
	We now want to show that each isomorphism type of connected components occurs the same number of times in both $\frakA_\infty$ and $\frakB_\infty$. Fix a connected component $C$ of the Gaifman graph of $\frakA_\infty$ or $\frakB_\infty$ and look for other occurrences of the isomorphism type of $C$. 
Select an $\omega$-ball type $\iota$ occurring in $C$. As $C$ is countable, $\iota$ can occur at most count\-ably many times in $C$; in all of $\frakA_\infty$ and $\frakB_\infty$, $\iota$ can only occur in connected components isomorphic to $C$. Since $\iota$ occurs either finitely often or $2^\omega$ times in both $\frakA_\infty$ and $\frakB_\infty$, the number of occurrences of $\iota$ in one of the structures determines the number of occurrences of the isomorphism type of $C$: If $\iota$ occurs $2^\omega$ times, then the isomorphism type must occur $2^\omega$ times, but if $\iota$ occurs finitely often, then we can simply compute the number of occurrences of the isomorphism type of $C$ as we know that each occurrence of the latter corresponds to a fixed number of occurrences of $\iota$. As $\iota$ occurs the same number of times in both $\frakA_\infty$ and $\frakB_\infty$, the isomorphism type of $C$ must also occur the same number of times in $\frakA_\infty$ and $\frakB_\infty$. 

It is therefore possible to match the connected components of the Gaifman graph of $\frakA_\infty$ one-to-one with the connected components of the Gaifman graph of $\frakB_\infty$ such that associated components are isomorphic.
In the light of Observation \ref{observationIsomorphismFromConnComps} this suffices to obtain an isomorphism of $\frakA_\infty$ and $\frakB_\infty$.
\end{proof}

\begin{proof}[Proof of the theorem]
	This is now immediate. Since every cofinite set is contained in every non-principal ultrafilter, the conditions on Hanf sequences guarantee the applicability of the last lemma for any pair of non-principal ultrafilters on $\N$.
\end{proof}

\begin{remark}
	We note that the slightly unwieldy conditions in the main lemma and the definition of Hanf sequences can be beautifully rephrased in topological terms. For this consider a sequence $(x_i)_{i \in \N}$ in a topological space, in this case $\N \cup \{ \infty \}$, and an arbitrary filter $\calF$ on $\N$. We define the sequence to converge to $x_\infty$ \wrt $\calF$, written $\calF{\!-\!}\lim_i x_i = x_\infty$, if for all neighbourhoods $U$ of $x_\infty$ we have \[ \{ i \in \N \colon x_i \in U \} \in \calF .\] Then the second condition on Hanf sequences simply states that $\calF_{\text{Fréchet}}{\!-\!}\lim_i n^\iota(\frakA_i)$ must exist (where $\calF_{\text{Fréchet}}$ is the cofinite filter). The condition in the main lemma reads as \[ \calU_1{\!-\!}\lim_i n^\iota(\frakA_i) = \calU_2{\!-\!}\lim_i n^\iota(\frakB_i) .\] It can be seen that both limits always exist; the remaining condition is equality of the limits. (Why do both limits exist? Consider a sequence $(x_i)$ in a compact space $X$ as a map $\mathbf x \colon \N \to X$. Then for any ultrafilter $\calU$ on $\N$, the image filter $\mathbf x(\calU)$ is an ultrafilter on $X$ and therefore converges to some $x_\infty \in X$. This is the limit of $(x_i)$ \wrt $\calU$.)
\end{remark}

\begin{corollary}
	Let $(\frakA_i)_i$ be a Hanf sequence and $m \in \N$. Then almost all $\frakA_i$ are $m$-equivalent, \ie there exists $N \in \N$ such that \[ \frakA_n \equiv_m \frakA_N \] for all $n \geq N$.
\end{corollary}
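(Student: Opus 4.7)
The plan is to reduce the claim to Theorem \ref{theoremHanfSequences} by a short counting/ultrafilter argument.

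First, since $\sigma$ is finite and relational, there are only finitely many $\equiv_m$-equivalence classes $C_1,\dotsc,C_k$, each axiomatised by a characteristic sentence $\chi_j$ of quantifier rank at most $m$. This partitions $\N$ into the disjoint pieces
\[
I_j := \{ i \in \N \colon \frakA_i \models \chi_j \}, \qquad j = 1, \dotsc, k.
\]
The conclusion of the corollary is precisely the assertion that one of the $I_j$ is cofinite in $\N$.

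I would establish this by contradiction. Suppose no $I_j$ is cofinite. Since the $I_j$ cover $\N$ and there are only finitely many of them, at least two of them, say $I_{j_1}$ and $I_{j_2}$, must be infinite. The Fr\'echet filter on $\N$ together with $I_{j_1}$ then has the finite intersection property (any infinite set meets every cofinite set), so it extends to a non-principal ultrafilter $\calU_1$ containing $I_{j_1}$; analogously produce a non-principal ultrafilter $\calU_2$ containing $I_{j_2}$. By \loz{}'s theorem we have
\[
{\prod}_i \frakA_i / \calU_1 \models \chi_{j_1} \quad \text{and} \quad {\prod}_i \frakA_i / \calU_2 \models \chi_{j_2}.
\]
Since $\chi_{j_1}$ and $\chi_{j_2}$ axiomatise distinct $\equiv_m$-classes, no single structure can satisfy both, so these ultraproducts are not elementarily equivalent—contradicting the isomorphism granted by Theorem \ref{theoremHanfSequences}. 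Hence some $I_{j_0}$ is cofinite, and taking $N$ to be any integer strictly greater than $\max(\N \setminus I_{j_0})$ yields the claim.

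I do not foresee a substantial obstacle; the only point requiring care is to ensure that the two witnessing ultrafilters are non-principal, which is automatic from the construction since they extend the Fr\'echet filter.
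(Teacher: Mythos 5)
Your proof is correct and follows essentially the same route as the paper: both reduce the claim to Theorem \ref{theoremHanfSequences} by extending the Fr\'echet filter together with an infinite ``bad'' set of indices to a non-principal ultrafilter and deriving a contradiction via \loz's theorem and characteristic sentences for $\equiv_m$-classes. The only cosmetic difference is that the paper fixes one ultraproduct's characteristic sentence $\chi$ up front rather than partitioning $\N$ by all $\equiv_m$-classes.
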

\begin{proof}
	Consider an arbitrary non-principal ultrafilter $\calU_1$ on $\N$. Since the signature $\sigma$ is finite, there is a formula $\chi$ which axiomatises $\prod_i \frakA_i / \calU_1$ up to $m$-equivalence. It now suffices to prove that only finitely many $\frakA_i$ do not satisfy $\chi$ -- all other $\frakA_i$ will then be $m$-equivalent to $\prod_i \frakA_i / \calU_1$. Assume to the contrary that there is an infinite set $I \subseteq \N$ of indices such that $\frakA_i \models \neg \chi$ for all $i \in I$. Then $I$ together with the Fréchet filter, \ie the set \[ \{ I \} \cup \{ M \subseteq \N \colon \text{ $\N \setminus M$ is finite} \} \] has the finite intersection property, hence there is a non-principal ultrafilter $\calU_2$ on $\N$ with $I \in \calU_2$. Now we have $\prod_i \frakA_i / \calU_2 \models \neg \chi$ but $\prod_i \frakA_i / \calU_1 \models \chi$ in contradiction to the last theorem.
\end{proof}

\begin{remark}
	This corollary is the primary use of the main theorem. It can also be immediately derived from Hanf's theorem. The statement itself is therefore unremarkable. However, we are unaware of any previous proof not using games. 
\end{remark}

We finish this section with some applications of the method of Hanf sequences. We will mostly consider undirected and directed graphs in the signature $\sigma_G = \{ E \}$ with binary edge relation $E$, as the transition from a structure to its Gaifman graph is very simple in these cases.

\begin{proposition}
	For $n \in \N$, let $\frakA_n$ be the directed path graph on $n$ nodes, \ie the $\sigma_G$-structure with universe $\{ 1, \dotsc, n \}$ and edge relation \[ E^{\frakA_n} = \{ (i, i + 1) \colon 1 \leq i < n \} .\] Then $(\frakA_n)_n$ is a Hanf sequence. In particular, for any given finite $m$, almost all $\frakA_n$ are $m$-equivalent. This implies that the property of having an even number of elements is not definable by a single first-order formula on this class of structures.
\end{proposition}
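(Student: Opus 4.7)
The plan is to verify the two defining conditions of a Hanf sequence for $(\frakA_n)$, invoke the preceding corollary to conclude that almost all $\frakA_n$ are $m$-equivalent for every fixed $m$, and then derive the inexpressibility of ``even cardinality'' by a standard quantifier-rank argument.

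First, for the bounded-degree condition: in the Gaifman graph of $\frakA_n$ every node has at most two neighbours (its successor and predecessor in the path), so every $m$-ball contains at most $2m + 1$ elements, giving a uniform bound independent of $n$. Next, I would classify the $m$-ball types. The $m$-ball of node $k$ in $\frakA_n$, viewed as a pointed $\sigma_G$-structure with distinguished element $k$, is isomorphic to a directed subpath with $l := \min(k - 1, m)$ predecessors and $r := \min(n - k, m)$ successors of the distinguished element, and two such balls are isomorphic as pointed structures iff they share the same pair $(l, r) \in \{0, \dotsc, m\}^2$. Hence there are only finitely many $m$-ball types.

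Now fix an $m$-ball type $\iota$ corresponding to a pair $(l, r)$ and count its occurrences in $\frakA_n$ for $n \geq 2m + 1$. If $l = r = m$, then $\iota$ is realised by exactly the ``interior'' nodes $m + 1, \dotsc, n - m$, so $n^\iota(\frakA_n) = n - 2m$, which tends to infinity, satisfying condition (b). If exactly one of $l, r$ equals $m$, say $l < m = r$, then $\iota$ is realised only by the node at position $l + 1$, so $n^\iota(\frakA_n) = 1$ eventually, satisfying condition (a) with $c = 1$ (symmetrically for $r < m = l$). Finally, if both $l < m$ and $r < m$, then no node of $\frakA_n$ has such an $m$-ball for $n \geq 2m + 1$, so $n^\iota(\frakA_n) = 0$ eventually, satisfying condition (a) with $c = 0$. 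This confirms that $(\frakA_n)_n$ is a Hanf sequence.

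Applying the corollary immediately following Theorem~\ref{theoremHanfSequences} yields that for every fixed $m$, almost all $\frakA_n$ are $m$-equivalent. For the inexpressibility claim, suppose for contradiction that some $\phi \in \FO(\sigma_G)$ defines ``the universe has even cardinality'' on the class $\{\frakA_n : n \in \N\}$, and let $m$ be the quantifier-rank of $\phi$. Then there exists $N$ with $\frakA_n \equiv_m \frakA_N$ for all $n \geq N$, so all such $\frakA_n$ agree on $\phi$; but both even and odd values of $n$ exceed $N$, contradicting our assumption on $\phi$. I expect no serious obstacle: the only point requiring care is the bookkeeping in step~two, ensuring that the classification of $m$-ball types by $(l, r)$ is both exhaustive and respects the directedness of the edge relation.
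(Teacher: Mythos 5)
Your proposal is correct and follows essentially the same route as the paper: bounded degree gives the uniform bound on $m$-ball sizes, and the $m$-ball types split into the single ``interior'' type whose count $n - 2m$ tends to infinity and the ``cut-off'' boundary types that each occur exactly once (or eventually zero times) for large $n$. Your classification by the pair $(l, r)$ and the explicit quantifier-rank argument for the inexpressibility claim are just slightly more detailed write-ups of what the paper leaves implicit.
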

\begin{proof}
	Since every node has degree at most two, the first condition for Hanf sequences is easily satisfied. Let $m \in \N$ be given and consider all $m$-ball types occuring in any of the structures. For $n > 2m$, the elements $m + 1, m + 2, \dotsc, n - m$ in $\frakA_n$ all have the same $m$-ball type. All other $m$-ball types occur exactly once since they are ``cut off'' either at the upper or the lower end. This proves that $(\frakA_i)_i$ is a Hanf sequence. 
\end{proof}

\begin{proposition}
	The class of planar graphs is not definable by a first-order formula among all finite graphs.
\end{proposition}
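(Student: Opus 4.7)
The plan is to construct a single Hanf sequence $(\frakC_i)_{i \in \N}$ of finite graphs such that infinitely many members are planar and infinitely many are non-planar. The preceding corollary then forces almost all $\frakC_i$ to be $m$-equivalent for any fixed $m$, so no first-order formula of quantifier rank $m$ can separate the planar from the non-planar terms of the sequence, and planarity cannot be $\FO$-definable.

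For the construction, for each $n \geq 1$ let $\frakA_n$ be the disjoint union of $n$ copies of the graph obtained from $K_4$ by subdividing every edge into a path of length $n$, and let $\frakB_n$ be the analogous construction based on $K_{3,3}$. Each $\frakA_n$ is planar, since a subdivision of a planar graph is planar, while each $\frakB_n$ contains a subdivision of $K_{3,3}$ and is therefore non-planar by Kuratowski's theorem. I would then interleave these by setting $\frakC_{2n} := \frakA_n$ and $\frakC_{2n+1} := \frakB_n$, so that planar and non-planar graphs alternate along $(\frakC_i)$.

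The main task is to verify that $(\frakC_i)$ satisfies both conditions in the definition of a Hanf sequence. The uniform degree bound of $3$ makes the first condition immediate. For the second, fix $m \in \N$ and restrict attention to $n > 2m$, so that no two degree-$3$ vertices in $\frakA_n$ or $\frakB_n$ are within distance $2m$. Any $m$-ball then contains at most one degree-$3$ vertex, and its rooted isomorphism type is determined solely by the distance $k$ from the center to the nearest degree-$3$ vertex (with $k > m$ giving a plain path ball). Consequently, the list of $m$-ball types occurring for $n > 2m$ is the same in $\frakA_n$ and $\frakB_n$, and the number of occurrences of each such type grows linearly or quadratically in $n$. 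The finitely many remaining $m$-ball types, which arise only when balls wrap around short subdivided copies of $K_4$ or $K_{3,3}$, have count zero in $\frakC_i$ for all sufficiently large $i$. Hence every $m$-ball type's count in $(\frakC_i)$ is either eventually zero or tends to infinity, establishing the Hanf sequence property.

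I expect the delicate point to be this local-structure comparison, namely the claim that once the subdivision length exceeds $2m$ the $m$-ball around any vertex forgets the global topology of the underlying unsubdivided graph and depends only on the distance to the nearest degree-$3$ vertex. Once this is granted, the Hanf sequence property is in hand, and the previous corollary delivers the required contradiction.
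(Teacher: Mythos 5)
Your proof is correct and follows the paper's method: build a single sequence alternating planar and non-planar graphs, verify it is a Hanf sequence, and invoke the corollary that almost all members are $m$-equivalent. The only substantive difference lies in the concrete construction. The paper subdivides $K_5$ and hand-crafts a planar companion (five copies of a two-cycle bouquet) whose degree-$4$ vertices and $m$-ball-type counts match those of the subdivided $K_5$ \emph{exactly}, so the sequence satisfies clause (a) of the Hanf condition with a fixed finite $c$ for every type. You instead subdivide $K_4$ and $K_{3,3}$ -- whose branch vertices conveniently have the same degree $3$, so the local ball shapes agree automatically -- and then take $n$ disjoint copies so that every surviving ball-type count diverges, landing everything in clause (b) and sparing you any exact count matching. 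Both verifications go through; yours is somewhat more robust (no bookkeeping of exact multiplicities), at the modest cost of having to note that the non-planarity of a disjoint union follows from the non-planarity of one component. Your identification of the delicate step -- that for subdivision length $n > 2m$ an $m$-ball sees at most one branch vertex and its type is determined by the distance to it -- is exactly right, and your argument for it is sound.
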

\begin{proof}
	Let $\frakA_0 = K_5$ be the complete graph on $5$ nodes with $10$ edges. Let $\frakA_n$ be constructed from $\frakA_0$ by inserting $n$ additional nodes in each edge. This yields a graph on $5 + 10n$ nodes with $10n$ nodes of degree two and $5$ distinguished nodes of degree four such that each pair among the latter nodes is connected by a path of length $n + 1$. It is clear that $\frakA_0$ is a minor of every $\frakA_n$, hence all $\frakA_n$ are not planar.
	
	On the other hand, consider a graph $G_n$ with the following structure:
	\[
	\xymatrix{
		{\bullet} \ar@{-}[r] & {\bullet} \ar@{.}[r] & {\hdots} \ar@{.}[r] &
{\bullet} \ar@{-}[r] & {\bullet} & \text{$n$ nodes} \\
		& & {\bullet} \ar@{-}[ull] \ar@{-}[dll] \ar@{-}[urr] \ar@{-}[drr] \\
		{\bullet} \ar@{-}[r] & {\bullet} \ar@{.}[r] & {\hdots} \ar@{.}[r] &
{\bullet} \ar@{-}[r] & {\bullet} & \text{$n$ nodes}
	}
	\]
	This is a graph on $2n + 1$ nodes with one node of degree four and $2n$ nodes of degree two. Let now $\frakB_n := G_n \mathop{\dot\cup} G_n \mathop{\dot\cup} G_n \mathop{\dot\cup} G_n \mathop{\dot\cup} G_n$. Then $\frakB_n$ is obviously a planar graph since each $G_n$ is planar. Furthermore, we easily convince ourselves that the sequence $\frakA_2, \frakB_2, \frakA_3, \frakB_3, \dotsc$ is a Hanf sequence. As above, it follows that planarity is not definable by a single first-order formula.	
\end{proof}

In the same manner we could now easily prove that connectivity of finite graphs is not first-order definable, as a sequence of structures consisting of cycle graphs and disjoint unions of two cycle graphs with increasing number of elements is a Hanf sequences. Instead, we prove the following stronger result. It was originally proved by Fagin in \cite{fagin} with a game argument. Nowadays it is often proved using Hanf's theorem. We slightly adapt the latter proof.
\begin{proposition}
	The property of being connected is not definable in existential monadic second order logic on the class of finite graphs.
\end{proposition}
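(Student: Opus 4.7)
Assume toward contradiction that an EMSO sentence $\Phi = \exists X_1 \dotsb \exists X_k\, \phi(X_1, \dotsc, X_k)$ defines connectivity on finite graphs, where $\phi$ is first-order in the signature $\sigma' := \sigma_G \cup \{X_1, \dotsc, X_k\}$. The plan is to apply Lemma \ref{lemmaForHanfSequences} to a pair of sequences $(\frakA_j)_j$ and $(\frakB_j)_j$ of expanded structures whose underlying graphs are respectively single cycles and disjoint unions of two cycles, deriving a contradiction via \loz's theorem. For each $n \geq 3$ the cycle $A_n$ is connected, so $A_n \models \Phi$, and we may fix a coloring $\bar{S}^n = (S_1^n, \dotsc, S_k^n)$ with $\frakA_n := (A_n, \bar{S}^n) \models \phi$.

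Because every $A_n$ has maximum degree two and $\sigma'$ adds only finitely many unary predicates, the set of possible colored $m$-ball types over $\sigma'$ is finite for each $m \in \N$. A standard diagonal argument --- enumerate the pairs $(m, \iota)$ and at stage $\ell$ thin the sequence so that the count of the $\ell$-th pair either stabilises at a finite value or diverges to $\infty$ --- extracts a subsequence $(\frakA_{n_j})_j$ which is a Hanf sequence in signature $\sigma'$.

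The heart of the proof is the construction of a companion sequence $(\frakB_j)_j$ of colored disjoint unions of two cycles with matching counting behaviour. Let $m_j \to \infty$ grow slowly with $n_j$, say $m_j = \lfloor \log n_j \rfloor$. Since the total number of colored $m_j$-ball types is bounded uniformly and $n_j$ eventually exceeds that bound many times over, pigeonhole yields two vertices $p_j, q_j \in A_{n_j}$ whose oriented colored $m_j$-neighborhoods (i.e.\ the length-$(2m_j+1)$ colored paths centered at $p_j$ and $q_j$, for some fixed orientation choice) agree and whose pairwise distance in $A_{n_j}$ is at least $3 m_j$ in both cyclic directions. Cutting $A_{n_j}$ at the edges outgoing from $p_j$ and $q_j$ splits the cycle into two arcs; reconnecting each arc into a cycle yields a disjoint union $B_j$ of two cycles, and the coloring transfers to give $\frakB_j$. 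Because the oriented colored $m_j$-neighborhoods at the two cut points match, the local picture in a radius $<\! m_j$ around each newly introduced edge coincides with a local picture already present in $\frakA_{n_j}$; consequently, for every $m < m_j$ and every colored $m$-ball type $\iota$, the count $n^\iota(\frakB_j)$ equals $n^\iota(\frakA_{n_j})$. After a final thinning, $(\frakB_j)_j$ is a Hanf sequence whose colored $m$-ball type counts coincide with those of $(\frakA_{n_j})_j$ at the Fréchet filter.

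Fix any non-principal ultrafilter $\calU$ on $\N$. By Lemma \ref{lemmaForHanfSequences}, $\prod_j \frakA_{n_j}/\calU \cong \prod_j \frakB_j/\calU$. By \loz's theorem the left-hand ultraproduct satisfies $\phi$, hence so does the right-hand one, hence $\calU$-many $\frakB_j \models \phi$. But then the underlying graphs $B_j$ satisfy $\Phi$; they are finite disjoint unions of two cycles and therefore disconnected, contradicting the assumption that $\Phi$ defines connectivity on finite graphs. The main obstacle is the combinatorial step in the third paragraph: one must verify that the cutting-and-reconnection operation at two vertices with matching oriented colored $m_j$-neighborhoods really does preserve the count of every colored $m$-ball type for $m < m_j$. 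This combinatorial identity is precisely the role played by the explicit duplicator strategy in the classical game-theoretic proof of Hanf's theorem.
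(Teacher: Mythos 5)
Your proof is correct and follows essentially the same route as the paper: colour the cycles to satisfy $\phi$, perform the two-point cut-and-splice surgery at vertices with matching oriented coloured neighbourhoods to obtain disconnected companions with identical $m$-ball type counts, and apply Lemma \ref{lemmaForHanfSequences} together with \loz's theorem to reach a contradiction. The only divergence is your preliminary diagonal extraction of a Hanf subsequence, which is harmless but unnecessary, since (as the paper notes) Lemma \ref{lemmaForHanfSequences} applies directly to the pair of sequences once the $\calU$-limits of the ball-type counts agree.
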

\begin{proof}
	Assume to the contrary that the $\exists$-MSO sentence \[ \psi = \exists P_1 \dotsc \exists P_K \phi \] with $\phi \in \FO_0(\{ E, P_1, \dotsc P_K \})$ does define connectivity on finite graphs.
	For every $n \in \N$, let $\frakC_n$ be a cycle graph coloured in such a way as to satisfy $\phi$, \ie a $\{ E, P_1, \dotsc P_K \}$-structure satisfying $\phi$ in which every element has degree $2$, and let $\frakC_n'$ be a a disjoint union of two coloured cycle graphs such that the following condition is satisfied:
	\[ \text{For every $n$-ball type $\iota$, both $\frakC_n$ and $\frakC_n'$ have the same number of elements of $n$-ball type $\iota$.} \]
	We will need to argue later that this condition can be satisfied. Note that $\frakC_n'$ cannot satisfy $\phi$ since a disjoint union of cycle graphs is not connected. We could now consider the sequence of structures $\frakC_1, \frakC_1', \frakC_2, \frakC_2', \dotsc$, but it is not clear whether this is a Hanf sequence. The original Lemma \ref{lemmaForHanfSequences} comes to the rescue. Let $\calU$ be a non-principal ultrafilter on $\N$ and consider $\frakC_\infty := \prod_i \frakC_i / \calU$ and $\frakC_\infty' := \prod_i \frakC_i' / \calU$. By construction $\frakC_\infty \models \phi$ and $\frakC_\infty' \models \neg\phi$.
	
	However, for all $m \in \N$ and all $m$-ball types $\iota$ we have $n^\iota(\frakC_k) = n^\iota(\frakC_k')$ whenever $k \geq m$. From this we quickly derive \[ \calU{\!-\!}\lim_i n^\iota(\frakC_i) = \calU{\!-\!}\lim_i n^\iota(\frakC_i') .\] By Lemma \ref{lemmaForHanfSequences}, this implies $\frakC_\infty \simeq \frakC_\infty'$, which contradicts the behaviour of $\phi$.
	
	It remains to see that we can find $\frakC_n$ and $\frakC_n'$ with the desired condition. This is exactly as in the standard proof of this proposition using Hanf's theorem. Consider a cycle graph on $N \geq 2n + 2$ nodes. As this satisfies $\psi$, there exists a colouring such that the coloured graph satisfies $\phi$. Since the $n$-ball of an element has exactly $2n + 1$ elements, at most $(2^K)^{2n + 1}$ $n$-ball types occur. For large $N$ we can force the existence of at two elements $a, b$ having the same $m$-ball type, even when taking cycle orientation into account, and having distance at least $2n + 1$. We can now modify two edges starting at $a$ and $b$ such that the cycle splits into two cycles, but without modifying the $m$-ball type of any element: Let $a'$ and $b'$ be neighbours of $a$ and $b$, respectively, in the same direction, \ie a traversal of the cycle starting at $a$ and $a'$ yields the nodes in order $a, a', b, b'$. Removing the edges $\{ a, a' \}$ and $\{ b, b' \}$ and adding the edges $\{ a, b' \}$ and $\{ b', a' \}$ results in two disjoint cycles, but does not change $n$-ball types due to choice of $a$ and $b$. Putting $\frakC_n$ to be the original coloured cycle of length $N$ and $\frakC_n'$ to be the modified graph fulfills the desired conditions.
	\[
	\xymatrix{
		a' \ar@(l,l)@{.}[dd] & a \ar@{-}[l] & & & & a' \ar@(l,l)@{.}[dd] & a \ar@{-}[dd]\\
		& & \ar@{-->}[rr] & & \\
		b \ar@{-}[r] & b' \ar@(r, r)@{.}[uu] & & & & b \ar@{-}[uu] & b' \ar@(r, r)@{.}[uu]
	}
	\]
\end{proof}

\section{Gaifman's theorem via ultraproducts}

We again restrict our attention to finite relational signatures in this section.

Besides Hanf's theorem there is one more well-known locality theorem in first-order logic, which is due to Gaifman.
\begin{definition}
	A formula $\phi(x) \in \FO_1(\sigma)$ is \emph{$l$-local} for $l \in \N$ if for all $\sigma$-structures $\frakA$ and all $a \in A$ we have \[ \frakA, a \models \phi \iff \frakA \restriction S_l(a), a \models \phi .\]
	A sentence of the form 
		\[ \exists x_1, \dotsc, x_n \Big( \bigwedge_{1 \leq i < j \leq n} d_{> 2l}(x_i, x_j) \land \bigwedge_{1 \leq i \leq n} \psi(x_i) \Big) \]
	where $\psi(x) \in \FO_1(\sigma)$ is an $l$-local formula is called a \emph{basic local sentence}. Here $d_{>2l}(x_i, x_j)$ is to be understood as a first-order formula asserting that $x_i$ and $x_j$ have distance greater than $2 l$ in the Gaifman graph.
\end{definition}

These definitions are useful because of the following locality result (\cite{gaifman}):
\begin{theorem}[Gaifman]
	Every first-order sentence is logically equivalent to a boolean combination of basic local sentences.
\end{theorem}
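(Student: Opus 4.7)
The plan is to reduce Gaifman's theorem to the semantic claim that any two $\sigma$-structures satisfying the same basic local sentences are elementarily equivalent, and then to prove this semantic claim by a back-and-forth between saturated ultrapowers. The reduction is standard compactness: granted the semantic claim, for any first-order sentence $\phi$ and any $\frakA \models \phi$, the basic-local theory of $\frakA$ together with $\{\neg\phi\}$ is inconsistent, so a finite conjunction $\chi_\frakA$ of basic local sentences true in $\frakA$ already implies $\phi$; a second application of compactness to $\{\phi\} \cup \{\neg\chi_\frakA \colon \frakA \models \phi\}$ produces finitely many of these $\chi_\frakA$ whose disjunction is equivalent to $\phi$.

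For the semantic claim, let $\frakA$ and $\frakB$ satisfy the same basic local sentences. I would pass to the $\N$-fold ultrapowers $\frakA^* := \frakA^\N / \calU$ and $\frakB^* := \frakB^\N / \calU$ for a non-principal ultrafilter $\calU$ on $\N$. These are $\omega_1$-saturated by the corollary following Proposition \ref{propCountableTypesAreRealised}, and they still satisfy the same basic local sentences, since $\frakA \prec \frakA^*$ and $\frakB \prec \frakB^*$ by \loz's theorem. By Karp's theorem it suffices to show $\frakA^* \simeq_\text{part} \frakB^*$, for this yields $\frakA^* \equiv \frakB^*$ and hence $\frakA \equiv \frakB$.

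The back-and-forth maintains, at stage $n$, tuples $\bar a \in (\frakA^*)^n$ and $\bar b \in (\frakB^*)^n$ such that for every $k \in \N$ the map $a_i \mapsto b_i$ extends to an isomorphism of pointed substructures $(S_k(\bar a), \bar a) \cong (S_k(\bar b), \bar b)$. To extend by an arbitrary $a_{n+1} \in \frakA^*$, the condition on a candidate $b_{n+1}$ forms a countable type over the parameters $\bar b$, so by $\omega_1$-saturation of $\frakB^*$ it suffices to realise each finite approximation. If $a_{n+1}$ lies within distance $2k$ of some $a_i$, the required image of $a_{n+1}$ at radius $k$ is forced by the already-established isomorphism at radius $3k$. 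If $a_{n+1}$ lies at distance $>2k$ from $\bar a$, I need to exhibit a $b_{n+1} \in \frakB^*$ of the prescribed $k$-ball type whose $k$-ball is disjoint from the $k$-balls of the $b_i$.

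The main obstacle is this ``far'' case: basic local sentences are parameter-free, yet the requirement on $b_{n+1}$ refers to the parameters $\bar b$. My plan is to rephrase the existence condition as the existence of $n + 1$ pairwise $2k$-separated elements of $\frakA^*$ whose $(2k)$-ball types match those of $(\bar a, a_{n+1})$; this is expressible as a boolean combination of basic local sentences and so transfers to $\frakB^*$. The delicate bookkeeping step is to argue that the first $n$ of the witnesses in $\frakB^*$ can be identified with $\bar b$ (and not with some other tuple of the same local profile). I expect this will require strengthening the back-and-forth invariant to keep track of the multiplicity of each $2k$-ball type occurring near $\bar a$ and $\bar b$, in the same spirit as the counts $n^\iota$ used for Hanf sequences in the previous section, and to iterate the basic local sentence argument over the relevant local types to pin down the required witness.
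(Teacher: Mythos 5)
Your compactness reduction is essentially the paper's (one small point: since the semantic claim needs agreement on \emph{all} basic local sentences, $\chi_\frakA$ must be allowed to contain negated basic local sentences as well, i.e.\ it is a boolean combination rather than a conjunction of positive ones — harmless for the conclusion). Passing to ultrapowers to gain saturation is also the paper's move. The gap is in the back-and-forth. First, your invariant asks for genuine isomorphisms $(S_k(\bar a), \bar a) \cong (S_k(\bar b), \bar b)$ for all $k$, and you propose to realise the condition on $b_{n+1}$ as a countable type using $\omega_1$-saturation. But the general Gaifman theorem has no bound on ball sizes, so $k$-balls may be infinite and their isomorphism types are not first-order expressible; the condition on $b_{n+1}$ is therefore not a set of first-order formulas and saturation does not apply to it. (This is exactly why the paper's Theorem \ref{theoremGaifmanVariantIsomorphy}, which does assume finite balls, gets isomorphism, while the general argument must retreat to the $k$-local formulas $\chi_{k,m}$ expressing $m$-equivalence of $k$-balls, plus Lemma \ref{lemmaElementaryEquivalenceOfOmegaBalls} to upgrade elementary equivalence of all $k$-balls to partial isomorphy of $\omega$-balls under $\omega$-saturation.)

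Second, the ``far'' case is the combinatorial heart of the theorem and your proposal leaves it as a plan (``I expect this will require\dots''). Expressing the existence of pairwise far-apart elements with \emph{different} prescribed local types as a boolean combination of basic local sentences is itself a nontrivial lemma, and the bookkeeping needed to force $n$ of the witnesses to coincide with the already-chosen $\bar b$ is precisely what makes the classical Ehrenfeucht--Fra\"iss\'e proof delicate (carefully shrinking radii and multiplicity thresholds round by round). The paper avoids this problem entirely by a different decomposition: it never runs a global back-and-forth with parameters, but instead counts connected components of the Gaifman graph up to partial isomorphy, shows via parameter-free basic local sentences (far-apart elements all of the \emph{same} local type) and $\omega$-saturation that each class of components occurs the same number of times, finite or infinite, in both structures, and then assembles elementary equivalence from a componentwise game. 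That decomposition is what makes the parameter problem disappear; without it, or without carrying out the multiplicity bookkeeping in full, your argument is incomplete.
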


This result is frequently proved by an Ehrenfeucht-Fra\"issé argument, although this is not how Gaifman originally proved it. We want to prove a variant of the theorem using ultraproducts. In our version we need an additional constraint on the size of $m$-balls as we know it from Hanf's theorem. We will also prove the full statement of Gaifman's theorem using a combination of game techniques, ultraproducts and compactness.

For both versions of the theorem we need the following consequence of the saturation properties of ultraproducts.

\begin{lemma}\label{lemmaOmegaSaturationImpliesNTypesAreRealised}
	Let $\frakA$ be $\omega$-saturated and $\Psi \subseteq \FO_n(\sigma)$ such that \[ \frakA \models \exists x_1 \dotsm \exists x_n \bigwedge \Psi_0(x_1, \dotsc, x_n) \] for all finite subsets $\Psi_0 \subseteq \Psi$. Then there are elements $a_1, \dotsc, a_n \in A$ such that \[ \frakA, a_1, \dotsc, a_n \models \Psi .\]
\end{lemma}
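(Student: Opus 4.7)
The plan is to prove this by induction on $n$, with the $n=1$ case being a direct instance of $\omega$-saturation (the set $\Psi$ is a type with no parameters because it is finitely realised, and $\omega$-saturation guarantees that such a type is realised).

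For the inductive step from $n-1$ to $n$, I would first project $\Psi$ to a set of formulae in the single variable $x_1$ by setting
\[ \Phi_1(x_1) := \left\{ \exists x_2 \cdots \exists x_n \bigwedge \Psi_0(x_1, x_2, \dotsc, x_n) \colon \Psi_0 \subseteq \Psi \text{ finite} \right\}. \]
Next I would verify that $\Phi_1$ is finitely realised in $\frakA$. Given finitely many formulae from $\Phi_1$ arising from finite sets $\Psi_0^{(1)}, \dotsc, \Psi_0^{(k)} \subseteq \Psi$, form the single finite set $\Psi_0 := \bigcup_{j \leq k} \Psi_0^{(j)} \subseteq \Psi$ and apply the hypothesis on $\Psi$ to find witnesses $a_1, \dotsc, a_n \in A$ with $\frakA \models \bigwedge \Psi_0(a_1, \dotsc, a_n)$; then $a_1$ simultaneously satisfies all $k$ chosen formulae. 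By $\omega$-saturation, $\Phi_1$ is realised by some $a_1 \in A$.

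Having fixed this $a_1$, I would consider the set of $(n-1)$-ary formulae with parameter $a_1$,
\[ \Psi'(x_2, \dotsc, x_n) := \{ \psi(c_{a_1}, x_2, \dotsc, x_n) \colon \psi \in \Psi \}, \]
and check that it still meets the finite existential hypothesis: for any finite $\Psi_0 \subseteq \Psi$ the sentence $\exists x_2 \cdots \exists x_n \bigwedge \Psi_0(x_1, x_2, \dotsc, x_n)$ lies in $\Phi_1$, and is therefore satisfied by $a_1$. Since $(\frakA, a_1)$ remains $\omega$-saturated (adjoining a single constant preserves $\omega$-saturation), the inductive hypothesis applied to $\Psi'$ produces $a_2, \dotsc, a_n \in A$ with $\frakA, a_1, a_2, \dotsc, a_n \models \Psi$, completing the induction.

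The only genuinely subtle step is checking that $\Phi_1$ is finitely realised: existential quantification does not commute with infinite conjunction, so I must first amalgamate the finitely many $\Psi_0^{(j)}$ into a single finite subset of $\Psi$ before invoking the hypothesis. Everything else is routine book-keeping once induction is set up.
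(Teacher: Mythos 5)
Your proof is correct. The paper actually states this lemma without proof, treating it as a standard consequence of $\omega$-saturation, and your induction on $n$ is exactly the canonical argument one would supply: realise the projected $1$-type $\Phi_1$, pass to the expansion $(\frakA, a_1)$ (which stays $\omega$-saturated since only finitely many constants are added), and recurse. You also correctly isolate the one genuinely delicate point, namely that the finitely many chosen formulae of $\Phi_1$ must be amalgamated into a single finite $\Psi_0 \subseteq \Psi$ before invoking the hypothesis, precisely because $\exists$ does not distribute over conjunction; the only cosmetic remark is that the induction is formally over all finite relational signatures at once, since $\Psi'$ lives over $\sigma \mathbin{\dot\cup} \{ c_{a_1} \}$, but this is harmless as the lemma is uniform in $\sigma$.
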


We can now prove our first version of Gaifman's theorem.

\begin{theorem}\label{theoremGaifmanVariantIsomorphy}
	Let $\frakA, \frakB$ be two $\N$-fold ultraproducts of cardinality $\leq 2^\omega$ \wrt non-principal ultrafilters such that the following condition is satisfied: For each $m \in \N$ there is an $e \in \N$ such that every $m$-ball in $\frakA$ and $\frakB$ has at most $e$ elements. If $\frakA$ and $\frakB$ satisfy the same basic local sentences, then they are isomorphic.
\end{theorem}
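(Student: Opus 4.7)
The plan is to use Gaifman's theorem to reduce the hypothesis to full elementary equivalence, and then mirror the connected-component matching of Lemma \ref{lemmaForHanfSequences} at the level of the ultraproducts themselves. Satisfying the same basic local sentences is trivially preserved by Boolean combinations of such sentences, so Gaifman's theorem yields $\frakA \equiv \frakB$. Consequently, for every $m \in \N$, every $m$-ball type $\iota$ and every $n \in \N$, the first-order sentence ``there exist at least $n$ distinct elements of $m$-ball type $\iota$'' has the same truth value in both structures, so the total counts $n^\iota(\frakA)$ and $n^\iota(\frakB)$ are either both the same finite number or both unbounded.

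Both $\frakA$ and $\frakB$ are $\omega_1$-saturated as $\N$-fold ultraproducts over a finite (hence countable) signature. The bounded-ball hypothesis makes every $m$-ball finite in each ultraproduct, so every connected component of the Gaifman graph is countable. Applying Proposition \ref{propCountableTypesAreRealised} to the characteristic formula $\phi_\iota$ of each $m$-ball type $\iota$ shows that its number of realizations is either finite or exactly $2^\omega$ in each structure; combining with the previous paragraph, for every $m$-ball type $\iota$, either both $n^\iota(\frakA) = n^\iota(\frakB) = c$ for some $c \in \N$, or both equal $2^\omega$.

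The remainder of the argument follows the outline of Lemma \ref{lemmaForHanfSequences}. By Lemma \ref{lemmaIsomorphyOfOmegaBalls} an $\omega$-ball type is determined by its $m$-ball truncations and therefore corresponds to a countable type, so Proposition \ref{propCountableTypesAreRealised} gives that its number of realizations in the ultraproduct is $2^\omega$ when every truncation is realized $2^\omega$-many times, and otherwise equals the minimum of the finite truncation counts. Matching $m$-ball-type counts then translate into matching $\omega$-ball-type counts. Each connected-component isomorphism class contributes a fixed per-copy cardinal of realizations of each of its $\omega$-ball types, and since components are countable while the structures' cardinalities are at most $2^\omega$, matching $\omega$-ball-type counts translate further into matching numbers of isomorphic copies of each connected component; Observation \ref{observationIsomorphismFromConnComps} then assembles the component-wise isomorphisms into a global $\frakA \simeq \frakB$.

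The main obstacle is the final cardinal-arithmetic bookkeeping when matching copy counts of components containing an $\omega$-ball type with $2^\omega$ realizations: one must invoke the countability of components, since any countable component contributes only countably many realizations of a given $\omega$-ball type, forcing the number of copies to be $2^\omega$ whenever the $\omega$-ball-type count is. If one wanted to avoid using Gaifman's theorem as a black box, an alternative first step would match total $m$-ball-type counts directly using basic local sentences whose $l$-local formula $\psi$ describes ``canonical root of a cluster of isomorphism type $C$'' for each finite cluster type $C$; summing $|C|$ times the pairwise-far-apart count of such roots recovers the total count of $m$-ball type $\iota$ and would bypass the need to cite Gaifman.
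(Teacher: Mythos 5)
Your proof is correct, but it takes a genuinely different route from the paper's. You first invoke the full classical Gaifman theorem to upgrade ``same basic local sentences'' to $\frakA \equiv \frakB$, then match the counts of each (finite) $m$-ball type using plain first-order counting sentences together with the finite-or-$2^\omega$ dichotomy of Proposition \ref{propCountableTypesAreRealised}, pass to $\omega$-ball types via Lemma \ref{lemmaIsomorphyOfOmegaBalls}, and finish with the component-counting bookkeeping of Lemma \ref{lemmaForHanfSequences}. The paper never leaves the basic local sentences: for an $\omega$-ball type $\iota$ realized in $n$ disjoint components of $\frakA$, it transfers the basic local sentences $\exists x_1 \dotsm \exists x_n \big( \bigwedge_{i < j} d_{>2m}(x_i, x_j) \land \bigwedge_{i} \chi_m(x_i) \big)$ (with $\chi_m$ pinning down the finite $m$-ball of a witness) to $\frakB$ by hypothesis, and uses $\omega$-saturation (Lemma \ref{lemmaOmegaSaturationImpliesNTypesAreRealised}) to realize the whole family at once, planting $n$ witnesses of type $\iota$ at pairwise infinite distance, \ie in $n$ distinct components. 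Your version buys cleaner cardinal bookkeeping at the price of a much heavier imported tool, and within the paper's architecture that price is real: this theorem's proof is the template for the lemma the paper later uses to give its new proof of Gaifman's theorem, so citing Gaifman here forfeits that payoff (and would be circular if the paper's own proof were intended as the source). Note also that your closing sketch for avoiding Gaifman does not yet work as stated: basic local sentences only count pairwise $2l$-scattered witnesses, and elements of a fixed $m$-ball type may cluster arbitrarily, so recovering the exact total count $n^\iota$ from scattered-witness counts requires a genuine argument; the paper's proof sidesteps this entirely by never counting elements of a fixed $m$-ball type, only disjoint components, for which scatteredness is exactly the right notion.
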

\begin{proof}
	As in the proof of Theorem \ref{theoremHanfSequences} on Hanf sequences we want to show that each isomorphism type of connected components occurs as many times in $\frakB$ as it occurs in $\frakA$. By Observation \ref{observationIsomorphismFromConnComps} this suffices to prove that the structures are globally isomorphic. We recall that each connected component is described by any of the $\omega$-ball types of elements in it. Since each connected component is countable by the size restriction on $m$-balls and each $\omega$-ball type is either realised finitely often or exactly $2^\omega$ times, we find that each isomorphism type of connected components occurs finitely often or exactly $2^\omega$ times.
	
	Let now $\iota$ be an arbitrary $\omega$-ball type. Assume that there are $n$ disjoint connected components in the Gaifman graph of $\frakA$ in which $\iota$ occurs; we show that there are at least $n$ such components in $\frakB$. Since the situation is symmetric in $\frakA$ and $\frakB$, this suffices to prove the claim.
	Let $a_1, \dotsc, a_n \in A$ with $\omega$-ball type $\iota$ in disjoint connected components of the Gaifman graph of $\frakA$. For $m \in \N$ let $\chi_m(x) \in \FO_1(\sigma)$ be a formula asserting that \[ (S_m(x), x) \simeq (S_m(a_1), a_1) .\] Such a formula exists because $S_m(a_1)$ is finite by assumption. By definition $\chi_m$ is $m$-local. Now consider the sequence $(\psi_m)_m$ of formulae in $n$ free variables $x_1, \dotsc, x_n$ given by \[ \psi_m := \bigwedge_{1 \leq i < j \leq n} d_{>2m}(x_i, x_j) \land \bigwedge_{1 \leq i \leq n} \chi_m(x_i) .\] Clearly we have \[ \frakA, a_1, \dotsc, a_n \models \psi_m \] for all $m$. Hence each of the basic local sentences \[ \exists x_1 \dotsm \exists x_n \psi_m \] is satisfied in $\frakA$ and therefore also in $\frakB$. Since $\psi_{m + 1} \models \psi_m$, this means that every formula of the form \[ \exists x_1 \dotsm \exists x_n \bigwedge_{m \leq M} \psi_m \] with $M \in \N$ is true in $\frakB$. By the last lemma we therefore find elements $b_1, b_2, \dotsc, b_n$ such that $\frakB, b_1, \dotsc, b_n \models \psi_m$ for all $m$. The $b_i$ must pairwise have infinite distance since $\psi_m$ asserts that they have distance greater than $2 m$. Furthermore each $b_i$ has $\omega$-ball type $\iota$ by Lemma \ref{lemmaIsomorphyOfOmegaBalls}.
	We have therefore found $n$ disjoint connected components of $\frakB$ in which the $\omega$-ball type $\iota$ occurs.
\end{proof}

We note that in the proof we did not actually use the factor structures of the ultraproducts. We only used the saturation properties of ultraproducts proved in the first section.

As a corollary we get a weak version of the Keisler-Shelah isomorphism theorem (\cite{keisler, shelahForKeislerShelah}), which in its full version is notoriously difficult to prove.
\begin{corollary}
	Let $\frakA$ and $\frakB$ be elementarily equivalent $\sigma$-structures of cardinality $\leq 2^\omega$ such that for every $m \in \N$ there is an $e \in \N$ such that every $m$-ball in $\frakA$ and $\frakB$ has at most $e$ elements.
	Let $\calU_1$ and $\calU_2$ be two arbitrary non-principal ultrafilters on $\N$. Then we have \[ \frakA^\N / \calU_1 \simeq \frakB^\N / \calU_2 .\]
\end{corollary}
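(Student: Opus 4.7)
The plan is to deduce this corollary directly from Theorem \ref{theoremGaifmanVariantIsomorphy}, by verifying that its hypotheses are met by the two ultrapowers $\frakA_\infty := \frakA^\N / \calU_1$ and $\frakB_\infty := \frakB^\N / \calU_2$.

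First I would check the cardinality bound. Since every factor has cardinality at most $2^\omega$, the corollary to Proposition \ref{propositionCardinalityOfUltraproducts} implies that $\abs{\frakA_\infty}, \abs{\frakB_\infty} \leq 2^\omega$.

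Next I would verify the uniform $m$-ball bound in the ultrapowers. For every fixed $m \in \N$, the hypothesis provides an $e \in \N$ such that every $m$-ball in $\frakA$ (and in $\frakB$) has at most $e$ elements. As noted in the proof of Lemma \ref{lemmaForHanfSequences}, the statement ``every $m$-ball has at most $e$ elements'' is expressible by a single first-order sentence, so it transfers to the ultrapowers via \loz's theorem. Thus the same uniform bound $e$ works in both $\frakA_\infty$ and $\frakB_\infty$.

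Finally I would handle the basic local sentences. By \loz's theorem, $\frakA_\infty \equiv \frakA$ and $\frakB_\infty \equiv \frakB$, and by assumption $\frakA \equiv \frakB$, so $\frakA_\infty \equiv \frakB_\infty$; in particular the two ultrapowers agree on all basic local sentences. All the hypotheses of Theorem \ref{theoremGaifmanVariantIsomorphy} are now in place, so it yields $\frakA_\infty \simeq \frakB_\infty$. There is no real obstacle here: the entire corollary is a matter of packaging the assumptions to match the theorem, with the nontrivial work having been done inside Theorem \ref{theoremGaifmanVariantIsomorphy} via the $\omega_1$-saturation of $\N$-fold ultraproducts and the analysis of connected components.
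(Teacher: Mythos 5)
Your proposal is correct and follows exactly the paper's own argument: both reduce the corollary to Theorem \ref{theoremGaifmanVariantIsomorphy} by observing that the cardinality bound passes to the ultrapowers and that the $m$-ball condition and agreement on basic local sentences are first-order and hence transfer by \loz's theorem. You merely spell out the routine verifications in more detail than the paper does.
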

\begin{proof}
	Both the cardinality requirements and the other conditions expressible as first-order statements on $\frakA$ and $\frakB$ translate to their ultrapowers. Then the previous theorem is applicable.
\end{proof}

We are now almost in a position to prove the usual version of Gaifman's theorem. If we can get rid of the size restrictions on $m$-balls in Theorem \ref{theoremGaifmanVariantIsomorphy}, a simple compactness argument will yield Gaifman's theorem. We now join forces with the game technique to prove the following two lemmas. These are analogous to Lemma \ref{lemmaIsomorphyOfOmegaBalls} and Observation \ref{observationIsomorphismFromConnComps}.

\begin{lemma}\label{lemmaElementaryEquivalenceOfOmegaBalls}
	Let $\frakA, \frakB$ be two $\omega$-saturated structures in signature $\sigma$ and $a \in A$, $b \in B$ such that
	\[ (S_k(a), a) \equiv (S_k(b), b) \] for all $k \in \N$. Then \[ S_\omega(a), a \simeq_\text{part} S_\omega(b), b .\] 
\end{lemma}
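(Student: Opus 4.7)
The plan is to prove partial isomorphism directly via a duplicator strategy in the unbounded Ehrenfeucht game on $(S_\omega(a), a)$ and $(S_\omega(b), b)$, with the hard work being a type-realization argument using $\omega$-saturation of the ambient structures $\frakA$ and $\frakB$. The invariant I would maintain after $n$ rounds (with selected pairs $a_1, \dotsc, a_n \in S_\omega(a)$ and $b_1, \dotsc, b_n \in S_\omega(b)$) is:
\[ (S_k(a), a, a_1, \dotsc, a_n) \equiv (S_k(b), b, b_1, \dotsc, b_n) \text{ for every } k \in \N \text{ large enough that all the listed elements lie in the respective } k\text{-balls.} \]
The base case $n=0$ is the hypothesis. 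All the atomic-type matching needed for the duplicator to win is then a trivial consequence of the invariant applied to small enough formulas.

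For the inductive step, suppose spoiler plays $a_{n+1} \in S_l(a)$. To choose a response, I would form the type $\Psi(x)$ in the signature of $\frakB$ (with parameters $b, b_1, \dotsc, b_n$) consisting of the formula $d(x,b) \leq l$ together with, for every first-order $\phi(x, y_0, \dotsc, y_n)$ and every admissible $k \geq l$, the relativized formula $\phi^{[k]}(x, b, b_1, \dotsc, b_n)$ whenever $(S_k(a), a, a_1, \dotsc, a_n) \models \phi(a_{n+1}, a, a_1, \dotsc, a_n)$, and its negation otherwise. Here $\phi^{[k]}$ is the formula provided by the relativization observation, asserting in $\frakB$ that $\phi$ holds in $S_k(y_0)$ with the indicated parameters. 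By construction $\Psi$ is a complete type picking out $a_{n+1}$'s behaviour inside every ball $S_k(a)$.

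To see that $\Psi$ is finitely satisfiable in $\frakB$, I would take a finite $\Psi_0 \subseteq \Psi$ and pick $k^*$ to be the maximum of all $k$-parameters appearing, large enough that $a_i \in S_{k^*}(a)$ and $b_i \in S_{k^*}(b)$ hold. Any $\phi^{[k]}$ with $k < k^*$ can be uniformly re-expressed as $(\phi^{\leq k})^{[k^*]}$, where $\phi^{\leq k}$ is obtained by relativizing its quantifiers to distance at most $k$ from the basepoint, so $\Psi_0$ reduces to a single existential statement inside $S_{k^*}(a)$ witnessed by $a_{n+1}$. The inductive invariant at $k^*$ then transfers this existential statement to $S_{k^*}(b)$, producing a witness in $S_{k^*}(b) \subseteq B$ that satisfies $\bigwedge \Psi_0$ in $\frakB$. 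Applying $\omega$-saturation of $\frakB$ to $\Psi$ yields $b_{n+1} \in B$; the formula $d(x, b) \leq l$ forces $b_{n+1} \in S_l(b) \subseteq S_\omega(b)$, and completeness of $\Psi$ preserves the invariant for the next round.

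The main obstacle is the bookkeeping around relativization: one must coherently express statements about $S_k$ as first-order statements about the ambient structure and, crucially, align the different radii $k$ appearing in any finite subset of $\Psi$ to a single $k^*$ so that the inductive hypothesis can be applied in one ball. Once this is set up cleanly, $\omega$-saturation does all the remaining work and there is no need for a separate König-style argument as in Lemma \ref{lemmaIsomorphyOfOmegaBalls}.
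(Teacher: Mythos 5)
Your proposal is correct and follows essentially the same route as the paper: the same unbounded-game strategy with the same invariant, with the inductive step handled by realizing a countable type of relativized formulas over the finitely many played parameters via $\omega$-saturation, and the same radius-alignment trick (reducing all radii in a finite subtype to a single $k^*$ so the invariant at $k^*$ yields finite satisfiability). The only cosmetic difference is that the paper packages the type as characteristic formulae $\phi_{k,n}$ axiomatising $n$-equivalence classes of the expanded $k$-balls, whereas you use the full relativized type directly; these carry the same information.
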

\begin{proof}
	We give a winning strategy for the duplicator in the infinite Ehrenfeucht game. During the game we always maintain the following invariant: When $m$-tuples of elements $a_1, \dotsc, a_m \in S_\omega(a)$ and $b_1, \dotsc, b_m \in S_\omega(b)$ have been played after $m$ rounds, it holds that \[ (S_k(a), a, a_1, \dotsc, a_m) \equiv (S_k(b), b, b_1, \dotsc, b_m) \] for all $k$ such that $a_1, \dotsc, a_m \in S_k(a)$. If we manage to maintain this condition, it is clear that the duplicator will never lose the game.
	
	Assume that $m$ rounds have been played. Let the spoiler choose a new element $a_{m + 1} \in S_\omega(a)$. (By symmetry of the situation it does not matter whether the spoiler chooses an element in $S_\omega(a)$ or in $S_\omega(b)$.)  For $n \in \N$ and sufficiently large $k$, let $\phi_{k, n} \in \FO_{m + 2}(\sigma)$ be a formula such that
	\[ \mathfrak{C}, c, c_1, \dotsc, c_{m + 1} \models \phi_{k, n} \iff (S_k(c), c, c_1, \dotsc, c_{m + 1}) \equiv_n (S_k(a), a, a_1, \dotsc, a_{m + 1}) \]
	for any $\sigma$-structure $\mathfrak{C}$ and elements $c, c_1, \dotsc, c_{m + 1}$; this is possible as we can axiomatise the ball $(S_k(a), a, a_1, \dotsc, a_n)$ up to $n$-equivalence with a single formula since the signature $\sigma$ is finite. To maintain our invariant it suffices to find $b_{m + 1} \in S_\omega(b)$ such that \[ \frakB, b, b_1, \dotsc, b_m, b_{m + 1} \models \phi_{k, n} \] for all $k$ and all $n$. But we can interpret the set $\Phi$ consisting of all $\phi_{k, n}$ as a countable type with parameters $b, b_1, \dotsc, b_m$ since $\Phi$ is finitely realised in $\frakB$: Let $\Phi_0 \subseteq \Phi$ be a finite subset with the value $k$ of all $\phi_{k, n}$ in it bounded by some $K$. Then $a_{m + 1}$ realises $\Phi_0$ in $S_K(a)$ where we use $a, a_1, \dotsc, a_m$ as parameters; hence \[ \exists x \bigwedge\Phi_0(b, b_1, \dotsc, b_m, x) \] is also true in $S_K(b)$ (and therefore in $\frakB$) because of elementary equivalence. 
	 
	 Because of $\omega$-saturation we therefore find $b_{m + 1} \in B$ realising $\Phi$ with parameters $b, b_1, \dotsc, b_m$. Since the set of all $\phi_{kn}$ also fixes the distance of $b$ and $b_{m + 1}$, we even get $b_{m + 1} \in S_\omega(b)$. This establishes the invariant for one more round.
\end{proof}

\begin{remark}
	Readers familiar with Fra\"issé's algebraic characterisation of partial isomorphy will recognise that we have essentially proved the existence of a back \& forth system.
\end{remark}

\begin{lemma}
	Let $\frakA$ and $\frakB$ be $\sigma$-structures. Assume that for any finite number of disjoint connected components $A_1, \dotsc, A_n$ of the Gaifman graph of $\frakA$ with \[ \frakA \restriction A_1 \equiv \dotsb \equiv \frakA \restriction A_n \] there is an equal number $B_1, \dotsc, B_n$ of disjoint connected components of the Gaifman graph of $\frakB$ with \[ \frakA \restriction A_1 \equiv \frakB \restriction B_1 \equiv \dotsb \equiv \frakB \restriction B_n \] and vice versa with $\frakA$ and $\frakB$ exchanged; in other words, each $\equiv$-class of connected components either occurs the same finite number of times in both $\frakA$ and $\frakB$ or occurs infinitely often in both structures.
	
	In this case, $\frakA$ and $\frakB$ are elementarily equivalent.
\end{lemma}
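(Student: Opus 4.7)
The plan is to fix $m \in \N$ and show $\frakA \equiv_m \frakB$; since $m$ is arbitrary, this gives $\frakA \equiv \frakB$. Because $\sigma$ is finite relational, there are only finitely many $\equiv_m$-equivalence classes of $\sigma$-structures, each axiomatised by a single sentence of quantifier rank at most $m$. For each such class $\tau$, I will write $N_\frakA(\tau)$ for the number of connected components of the Gaifman graph of $\frakA$ whose induced substructure lies in $\tau$, and define $N_\frakB(\tau)$ analogously.

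The central observation will be that for every $\equiv_m$-class $\tau$, either $N_\frakA(\tau) = N_\frakB(\tau)$ is finite or both counts are infinite. To prove this, note that $\tau$ is a disjoint union of full $\equiv$-equivalence classes of $\sigma$-structures. If $N_\frakA(\tau)$ is finite, then only finitely many $\equiv$-classes $\rho_1, \dotsc, \rho_k$ refining $\tau$ appear among the connected components of $\frakA$, each with finite multiplicity; the hypothesis then asserts that each $\rho_i$ appears with the same multiplicity in $\frakB$, and that any $\equiv$-class refining $\tau$ but absent from $\frakA$ must also be absent from $\frakB$ (otherwise, by the hypothesis, it would appear in $\frakA$ as well). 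Summing multiplicities yields $N_\frakB(\tau) = N_\frakA(\tau)$. If instead $N_\frakA(\tau) = \infty$, then either some refining $\equiv$-class has infinite multiplicity in $\frakA$ (forcing infinite multiplicity in $\frakB$ by the hypothesis), or infinitely many distinct refining $\equiv$-classes occur in $\frakA$ (each of which must occur in $\frakB$); in both cases $N_\frakB(\tau) = \infty$. The situation is symmetric between $\frakA$ and $\frakB$.

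Given this matching of $\equiv_m$-type counts for components, the conclusion will follow from a standard Ehrenfeucht--Fra\"iss\'e argument on disjoint sums. The duplicator's strategy in the $m$-round game on $\frakA$ and $\frakB$ is to maintain, after $i$ rounds, a pairing of the at most $i$ touched components on each side such that each pair is $\equiv_{m-i}$-equivalent with the played elements as parameters. Whenever the spoiler plays in a previously touched component, she responds in its partner using the local $\equiv_{m-i}$-winning strategy; whenever the spoiler opens a new component of some $\equiv_{m-i}$-type $\tau'$, she picks a fresh component of the same type on the other side. Such a fresh component is always available because, for any $\equiv_m$-class refining $\tau'$, either the counts in $\frakA$ and $\frakB$ agree from the start (so a bijection of unused components can be fixed in advance), or both counts are infinite (so after at most $m$ rounds fresh components remain).

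The main obstacle is expected to be the bookkeeping in this last step, in particular verifying that a suitable fresh partner component is always available as the effective quantifier rank drops from $m$ to $m-i$. This is a well-known EF-game calculation for disjoint sums of relational structures, however, and no essentially new idea beyond the central observation on $\equiv_m$-type counts is required.
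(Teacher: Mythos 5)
Your proof is correct and follows essentially the same route as the paper: an Ehrenfeucht--Fra\"iss\'e composition argument in which the duplicator answers each newly opened connected component with a fresh equivalent component on the other side and continues the sub-game on that pair. The only difference is that you first convert the hypothesis on $\equiv$-classes of components into a counting statement about $\equiv_m$-classes; the paper skips this reduction and matches components by full elementary equivalence directly, which makes the availability of fresh partner components immediate from the hypothesis.
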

\begin{proof}
	There is a winning strategy for the duplicator in an $m$-round Ehrenfeucht game on $\frakA$ and $\frakB$ for any $m \in \N$: For any move of the spoiler in a connected component of $\frakA$ or $\frakB$ in which no element has been picked yet, the duplicator chooses a new elementarily equivalent connected component of $\frakB$ or $\frakA$, respectively, and plays as if an $m$-round game were played on the two connected components. If the spoiler picks an element of a connected component of $\frakA$ or $\frakB$ in which an element has been picked before, the duplicator continues with the sub-game on the associated elementarily equivalent component of $\frakB$ or $\frakA$, respectively. Since we can ensure that no sub-game is lost by the duplicator -- we only select pairs of elementarily equivalent connected components -- the duplicator will not lose the global game.
\end{proof}

As the main ingredient for the proof of Gaifman's theorem we use the following lemma. Here we don't even need $\frakA$ and $\frakB$ to satisfy the specific ultraproduct saturation properties (e.g.\ that a countable type is realized by either finitely many or at least $2^\omega$ many elements), simple $\omega$-saturation suffices.

\begin{lemma}
	Let $\frakA, \frakB$ be two $\omega$-saturated $\sigma$-structures. If $\frakA$ and $\frakB$ satisfy the same basic local sentences, then they are elementarily equivalent.
\end{lemma}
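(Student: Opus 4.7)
The plan is to apply the preceding component-matching lemma: it suffices to show that for every finite family $A_1, \dotsc, A_n$ of pairwise disjoint connected components of the Gaifman graph of $\frakA$ with $\frakA \restriction A_1 \equiv \dotsb \equiv \frakA \restriction A_n$, there exist $n$ disjoint connected components $B_1, \dotsc, B_n$ of the Gaifman graph of $\frakB$ with $\frakB \restriction B_i \equiv \frakA \restriction A_1$ for each $i$ (and the same with $\frakA$ and $\frakB$ swapped). Since the hypothesis on basic local sentences is symmetric in $\frakA, \frakB$, it is enough to carry out one direction.

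To produce such $B_i$, I would fix $a_1 \in A_1$ and, for each $k, r \in \N$, let $\chi_{k,r}(x) \in \FO_1(\sigma)$ be a $k$-local formula axiomatising the $\equiv_r$-class of $(S_k(a_1), a_1)$; such a formula exists since the signature is finite relational and characteristic formulae up to $\equiv_r$-equivalence can be relativised to $k$-balls. For every $K, R \in \N$, consider the basic local sentence
\[
\phi_{K,R} := \exists x_1 \dotsm \exists x_n \Bigl( \bigwedge_{i<j} d_{>2K}(x_i, x_j) \land \bigwedge_i \chi_{K,R}(x_i) \Bigr).
\]
The key observation is that $\phi_{K,R}$ holds in $\frakA$: the element $a_1$ witnesses $\exists x\, \chi_{K,R}(x)$ in $\frakA \restriction A_1$ by locality of $\chi_{K,R}$, so by elementary equivalence the same existential holds in every $\frakA \restriction A_i$, yielding witnesses $a_i \in A_i$ that are pairwise at infinite distance because they lie in distinct components. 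By the hypothesis of the lemma, $\phi_{K,R}$ also holds in $\frakB$.

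Next, I would consider the countable set of formulae
\[
\Psi(x_1, \dotsc, x_n) := \{ d_{>2k}(x_i, x_j) \colon i<j,\ k \in \N \} \cup \{ \chi_{k,r}(x_i) \colon 1 \leq i \leq n,\ k, r \in \N \}
\]
and observe that every finite subset of $\Psi$ is implied by some $\phi_{K,R}$, so $\Psi$ is finitely realised in $\frakB$. Lemma \ref{lemmaOmegaSaturationImpliesNTypesAreRealised} together with $\omega$-saturation of $\frakB$ then produces $b_1, \dotsc, b_n \in B$ realising $\Psi$; these lie in pairwise distinct connected components $B_i$ (as they are at infinite distance) and satisfy $(S_k(b_i), b_i) \equiv (S_k(a_1), a_1)$ for every $k$. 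Lemma \ref{lemmaElementaryEquivalenceOfOmegaBalls} (using $\omega$-saturation of both structures) now yields $(S_\omega(b_i), b_i) \simeq_\text{part} (S_\omega(a_1), a_1)$, so in particular $\frakB \restriction B_i \equiv \frakA \restriction A_1$. The symmetric argument handles the reverse direction, and the component-matching lemma then delivers $\frakA \equiv \frakB$.

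The main obstacle I expect is the clean verification that $\phi_{K,R}$ holds in $\frakA$: this is exactly where locality of $\chi_{K,R}$ is essential, since it allows the existence of a suitable element to be transferred from $A_1$ to each $A_i$ via elementary equivalence of the induced components (without locality, the restrictions $\frakA \restriction A_i$ would not inherit this existential from $\frakA \restriction A_1$). Once this step is in place, the remaining ingredients --- $\omega$-saturation applied to $n$-types, the back-and-forth on $\omega$-balls of Lemma \ref{lemmaElementaryEquivalenceOfOmegaBalls}, and the component-matching lemma --- assemble essentially mechanically.
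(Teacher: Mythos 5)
Your proof is correct and follows essentially the same route as the paper's: relativised characteristic formulae $\chi_{k,r}$, transfer of the corresponding basic local sentences from $\frakA$ to $\frakB$, $\omega$-saturation (via Lemma \ref{lemmaOmegaSaturationImpliesNTypesAreRealised}) to realise the resulting $n$-type, and then Lemma \ref{lemmaElementaryEquivalenceOfOmegaBalls} together with the component-matching lemma to conclude. The only (harmless) difference is that you produce witnesses $a_i \in A_i$ for each sentence $\phi_{K,R}$ separately from elementary equivalence of the induced components, whereas the paper fixes elements $a_i$ once and for all by a round of the infinite Ehrenfeucht game and accordingly works with partial-isomorphism classes of components rather than $\equiv$-classes.
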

\begin{proof}
	The proof is similar to the proof of Theorem \ref{theoremGaifmanVariantIsomorphy} except that elementary equivalence and partial isomorphy replace full isomorphy.
	
	We want to apply the last lemma. Since partial isomorphy is a finer equivalence relation than elementary equivalence, it suffices to show that each $\simeq_\text{part}$-class of connected components either occurs the same finite number of times in both $\frakA$ and $\frakB$ or occurs infinitely often in both. Therefore choose $n$ partially isomorphic disjoint connected components $A_1, \dotsc, A_n$ of $\frakA$ and show that there are at least $n$ connected components of $\frakB$ of the same partial isomorphism type. (The situation is symmetric in $\frakA$ and $\frakB$.)
	
	We pick an arbitrary element $a_1 \in A_1$. By playing the first round of an infinite Ehrenfeucht game, we find elements $a_i \in A_i$ such that \[ A_1, a_1 \simeq_\text{part} A_2, a_2 \simeq_\text{part} \dotsb \simeq_\text{part} A_n, a_n .\] For $k, m \in \N$ let $\chi_{k, m}(x) \in \FO_1(\sigma)$ be a formula asserting that \[ (S_k(x), x) \equiv_m (S_k(a_1), a) .\] This is obviously a $k$-local formula. We now consider the family $(\psi_{k, m})_{k, m \in \N}$ of formulae in $n$ free variables $x_1, \dotsc, x_n$ given by \[ \psi_{k, m} := \bigwedge_{1 \leq i < j \leq n} d_{> 2m}(x_i, x_j) \land \bigwedge_{1 \leq i \leq n} \chi_{k, m}(x_i) .\] Since each of the sentences \[ \exists x_1 \dotsm \exists x_n \bigwedge\Psi_0(x_1, \dotsc, x_n), \] where $\Psi_0$ is a finite collection of formulae of the form $\psi_{k, m}$, is equivalent to a basic local sentence and satisfied in $\frakA$ (by $a_1, \dotsc, a_n$) and therefore also in $\frakB$, Lemma \ref{lemmaOmegaSaturationImpliesNTypesAreRealised} implies the existence of elements $b_1, \dotsc, b_n$ such that \[ \frakB, b_1, \dotsc, b_n \models \psi_{k, m} \] for all $k, m$. This means that the $b_i$ have pairwise infinite distance, \ie are in different connected components of $\frakB$, and each satisfy all $\chi_{k, m}$, implying \[ (S_k(b_i), b_i) \equiv (S_k(a_1), a_1) \] and therefore, by Lemma \ref{lemmaElementaryEquivalenceOfOmegaBalls}, \[ (S_\omega(b_i), b_i) \simeq_\text{part} (S_\omega(a_1), a_1) .\]
	We have therefore found $n$ disjoint connected components of $\frakB$ of the same partial isomorphism type as the $A_i$.
\end{proof}

We can now use compactness to prove Gaifman's theorem.

\begin{proof}[Proof of Gaifman's theorem]
	Let $\phi \in \FO_0(\sigma)$ be a sentence. Let 
	\begin{align*} 
		\Psi &:= \{ \psi \in \FO_0(\sigma) \colon \text{ $\psi$ is a boolean combination of basic local sentences} \}, \\
		\Psi_\phi &:= \{ \psi \in \Psi \colon \phi \models \psi \} 
	.
	\end{align*} By compactness it suffices to show that $\Psi_\phi \models \phi$ since we then have $\bigwedge \Psi^0 \equiv \phi$ for some finite subset $\Psi^0 \subseteq \Psi_\phi$. Assume to the contrary that $\Psi_\phi \cup \{ \neg \phi \}$ has a model $\frakA$. Let \[ \Psi_\frakA := \{ \psi \in \Psi \colon \frakA \models \psi \} ;\] we claim that $\Psi_\frakA \cup \{ \phi \}$ is satisfiable. If it were not satisfiable, there would be a finite subset $\Psi_\frakA^0 \subseteq \Psi_\frakA$ such that $\phi \models \neg \bigwedge \Psi_\frakA^0$, but then $\neg \bigwedge \Psi_\frakA^0 \in \Psi_\phi$ and therefore $\frakA \models \neg \bigwedge \Psi_\frakA^0$, which is a contradiction to $\frakA \models \Psi_\frakA$. Hence we find a model $\frakB$ of $\Psi_\frakA \cup \{ \phi \}$.
	
	Now the two structures $\frakA$ and $\frakB$ satisfy the same basic local sentences, but disagree on $\phi$. Forming $\N$-fold ultrapowers (or getting $\omega$-saturated elementary extension in some other way) we get a contradiction to the last lemma.
\end{proof}

\begin{remark}
	This new proof for Gaifman's theorem has been found independently by Lindell, Towsner and Weinstein (\cite[Theorem (11)]{lindellWeinstein}).
\end{remark}

\section{Ultraproducts of finite linear orderings}

In the preceeding sections we have been reasonably successful in proving inexpressibility results for some classes of finite structures, especially graphs, using variants of the locality theorems of Hanf and Gaifman. In this section we look at one more class of structures, namely finite linear orderings. The results obtained in the previous two sections are not applicable as the Gaifman graph of a linearly ordered structure is a complete graph -- locality is therefore not a useful concept.

As a starting point, we look at the following theorem. It will turn out that we can use it to prove more general inexpressibility results.

\begin{theorem}\label{theoremEvenNumberOfEltsIsNotDefinableForLinOrds}
	The property of having an even number of elements is not $\FO(\{ < \})$-definable in the class of all finite linear orderings.
\end{theorem}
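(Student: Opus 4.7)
The plan is a proof by contradiction using ultraproducts and the saturation results of Section 1. Suppose for contradiction that some $\phi \in \FO(\{<\})$ defines the property of having an even number of elements on the class of all finite linear orderings. Fix a non-principal ultrafilter $\calU$ on $\N$ and set
\[ \frakA := \prod_n [2n]/\calU \qquad \text{and} \qquad \frakB := \prod_n [2n+1]/\calU,\]
where $[k]$ stands for the linear ordering $(\{1,\dots,k\},<)$. By \loz's theorem, $\frakA \models \phi$ and $\frakB \models \neg\phi$, so to conclude it suffices to show $\frakA \equiv \frakB$.

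Both $\frakA$ and $\frakB$ are models of the $\FO(\{<\})$-theory $T$ axiomatising discrete linear orders with a least and a greatest element in which every non-maximal element has an immediate successor and every non-minimal one an immediate predecessor, together with the schema $\exists^{\geq k} x$ for every $k \in \N$. The first-order sentences of $T$ hold in every factor, while each $\exists^{\geq k} x$ holds in cofinitely many factors (and hence in a $\calU$-large set) because $\calU$ is non-principal. By the corollary following Proposition \ref{propCountableTypesAreRealised}, both ultraproducts are $\omega_1$-saturated.

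The crucial step is the completeness of $T$, which then yields $\frakA \equiv \frakB$ at once. I would establish completeness by a back-and-forth between any two $\omega$-saturated models $M, N \models T$. Picturing $M$ as an initial $\omega$-chain descending from the minimum, a final $\omega^*$-chain ascending to the maximum, and an intermediate bulk of $\mathbb{Z}$-chains, the induction step goes as follows: given an elementary partial isomorphism $g$ on a finite set and a new element $m \in M$, the complete type $\mathrm{tp}(m/\mathrm{dom}(g))$ is countable in the countable signature $\{<\}$, finitely satisfiable in $N$ through $g$ by elementarity, and therefore realised in $N$ by $\omega$-saturation; extending $g$ accordingly preserves elementarity, and the opposite direction is symmetric. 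Thus $M \simeq_\text{part} N$ for any such pair, so $\frakA \simeq_\text{part} \frakB$ in particular, and $\frakA \equiv \frakB$ follows.

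The main obstacle is the need to keep $g$ elementary rather than merely quantifier-free-preserving. One could in principle argue directly about the $\omega$-, $\omega^*$- and $\mathbb{Z}$-chain structure, but would then have to check by hand that each bracket $(g(a), g(b))$ in $N$ has precisely as much "slack" around its endpoints as the corresponding bracket $(a,b)$ in $M$. Working instead with complete types and repeatedly invoking $\omega$-saturation bypasses this bookkeeping, since all relevant size constraints are implicit in $\mathrm{tp}(m/\mathrm{dom}(g))$ and are automatically honoured once that type is realised in $N$.
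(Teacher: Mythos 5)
Your overall strategy (fix a non-principal $\calU$, form the two ultraproducts, invoke \loz\ to reduce the theorem to $\frakA \equiv \frakB$) is exactly the paper's setup, and that reduction is correct. The gap is in the crucial step: your proof that the theory $T$ of infinite discrete linear orders with endpoints is complete is circular. The back-and-forth you describe extends a \emph{partial elementary} map $g$ by realising $\mathrm{tp}(m/\mathrm{dom}(g))$ in $N$; but that type is finitely satisfiable in $N$ ``through $g$'' only because $g$ is assumed elementary, and the base case of the induction is the empty map, which is elementary precisely when $M \equiv N$ --- the completeness of $T$ you are trying to establish. What the saturation argument actually yields is the implication ``$M \equiv N$ and both $\omega$-saturated $\Rightarrow M \simeq_\text{part} N$'' (the standard fact alluded to in the remark after Lemma \ref{lemmaUltraproductsPartiallyIsomorphic}); it cannot manufacture elementary equivalence from nothing. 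The bookkeeping you hoped to bypass --- checking that each interval $(g(a), g(b))$ has the same finite length on both sides or is infinite on both sides --- \emph{is} the content of completeness here, and would have to be carried out by hand via an Ehrenfeucht--Fra\"iss\'e argument or quantifier elimination, which defeats the stated purpose.

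The paper sidesteps completeness entirely with a short structural argument that you should compare with: writing $\frakA_\text{even} = \prod_n \frakA_{2n+2}/\calU$ and $\frakA_\text{odd} = \prod_n \frakA_{2n+1}/\calU$, deleting the maximal element $a_\text{max} = [(2n+2)_n]$ gives a canonical isomorphism $\frakA_\text{odd} \simeq \frakA_\text{even} \setminus \{a_\text{max}\}$, and since $\frakA_\text{even}$ is an infinite discrete order with endpoints, its order type is a copy of $\omega$, followed by copies of $\omega^* + \omega$, followed by a copy of $\omega^*$; removing one point does not change this, so the two ultraproducts are outright isomorphic (no saturation, no completeness of $T$ needed). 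To repair your write-up, either adopt this direct isomorphism or supply an honest, non-circular proof of the completeness of $T$.
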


To show this, let $\frakA_n$ be the canonical linear ordering on the elements $\{ 1, \dotsc, n \}$. Let $\calU$ be an arbitrary non-principal ultrafilter on $\N$. To prove the theorem, it suffices to show that \[ \frakA_\text{even} := \prod_{n \in \N} \frakA_{2n + 2} / \calU \equiv \prod_{n \in \N} \frakA_{2n + 1} / \calU =: \frakA_\text{odd}, \] since a single formula axiomatising even cardinality would separate $\frakA_\text{even}$ from $\frakA_\text{odd}$. It turns out that we can prove an even stronger statement with a simple structural argument.

\begin{lemma}
	The two structures $\frakA_\text{even}$ and $\frakA_\text{odd}$ are isomorphic.
\end{lemma}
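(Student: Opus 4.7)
The plan is to view both ultraproducts as initial segments of the ultrapower $\N^\ast := \N^\N / \calU$ of the natural numbers and then to observe that $\frakA_\text{even}$ is nothing but $\frakA_\text{odd}$ with a single new maximum element appended. Since the top of $\frakA_\text{odd}$ has order type $\omega^\ast$, this extra element can be absorbed by shifting the tail by one.

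Concretely, via the natural inclusions $\{1, \dotsc, 2i+1\} \hookrightarrow \N$ (respectively $\{1, \dotsc, 2i+2\} \hookrightarrow \N$), the two ultraproducts embed order-preservingly into $\N^\ast$ as
\[ \frakA_\text{odd} = \{ a \in \N^\ast : 1 \leq a \leq N_1 \} \quad \text{and} \quad \frakA_\text{even} = \{ a \in \N^\ast : 1 \leq a \leq N_2 \}, \]
where $N_1 := [(2i+1)_i]_\calU$ and $N_2 := [(2i+2)_i]_\calU$. Since the sequence $(2i+2)_i$ is pointwise the successor of $(2i+1)_i$, we have $N_2 = N_1 + 1$ in $\N^\ast$, so $\frakA_\text{even} = \frakA_\text{odd} \cup \{N_2\}$ as linearly ordered sets.

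Next, define the upper tail $F := \{N_1 - k : k \in \N\} \subseteq \frakA_\text{odd}$; it is upward-closed in $\frakA_\text{odd}$ and has order type $\omega^\ast$. The analogous set $F_\text{even} := \{N_2 - k : k \in \N\}$ in $\frakA_\text{even}$ satisfies $F_\text{even} = F \cup \{N_2\}$. Let $\phi : \frakA_\text{odd} \to \frakA_\text{even}$ be the identity on $\frakA_\text{odd} \setminus F$ and the successor shift $a \mapsto a + 1$ on $F$.

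A short case check confirms that $\phi$ is an order-isomorphism: the shift sends $F$ bijectively onto $F_\text{even}$, the identity handles $\frakA_\text{odd} \setminus F$ (which coincides with $\frakA_\text{even} \setminus F_\text{even}$), the two images are disjoint, and since $F$ lies strictly above $\frakA_\text{odd} \setminus F$ the piecewise definition introduces no order inversions. The only conceptual step is the classical observation that appending a new top element to an $\omega^\ast$-tail yields an isomorphic chain; everything else is bookkeeping. In particular the argument is entirely structural and uses neither the continuum hypothesis nor the saturation machinery of Section~1.
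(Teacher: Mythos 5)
Your proof is correct and takes essentially the same route as the paper's: both arguments boil down to the observation that $\frakA_\text{even}$ is $\frakA_\text{odd}$ with a single new top element, which the $\omega^\ast$-tail absorbs. The only cosmetic difference is that you write down the absorbing shift $a \mapsto a+1$ on the tail explicitly, where the paper instead appeals to the decomposition of the order type as $\omega$ plus copies of $\omega^\ast + \omega$ plus $\omega^\ast$ and notes that deleting one point does not change it.
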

\begin{proof}
	Both structures are discrete linear orderings with minimal and maximal elements since these are first-order definable properties. Consider the element \[ a_\text{max} := [ (2n + 2)_{n \in \N} ] \in \frakA_\text{even} ;\] it is clearly maximal in $\frakA_\text{even}$. As any element $[(a_n)_n]$ of $\frakA_\text{even}$ not equal to $a_\text{max}$ has a representative such that $a_n < 2n + 2$ for every $n$, we get a canonical isomorphism $\frakA_\text{odd} \simeq \frakA_\text{even} \setminus \{ a_\text{max} \}$. But since $\frakA_\text{even}$ is infinite (due to non-principality of $\calU$), its order type can be written as an ordered sum of a copy of $\omega$, some copies of $\omega^\ast + \omega$ and a copy of $\omega^\ast$. ($\omega^\ast$ denotes the reverse order type of $\omega$.) Removing any element clearly does not change the order type, \ie \[ \frakA_\text{even} \simeq \frakA_\text{even} \setminus \{ a_\text{max} \} \simeq \frakA_\text{odd}. \qedhere \]
\end{proof}

\begin{corollary}
	Let $m \in \N$. Then there is an $N \in \N$ such that $\frakA_{2n + 1} \equiv_m \frakA_{2n + 2}$ for all $n \geq N$.
\end{corollary}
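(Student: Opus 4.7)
The plan is to derive the corollary from the isomorphism result in the preceding lemma combined with Lemma \ref{lemmaUltraproductsElEquivIffMostFactorsMEquiv}. Since the signature $\{<\}$ is finite and relational, Lemma \ref{lemmaUltraproductsElEquivIffMostFactorsMEquiv} gives us the converse direction: if two $\N$-fold ultraproducts (over the same ultrafilter $\calU$) are elementarily equivalent, then for every $m \in \N$ the set of indices where the factors are $m$-equivalent lies in $\calU$. I would apply this together with the fact that $\frakA_\text{even} \simeq \frakA_\text{odd}$ (and hence $\frakA_\text{even} \equiv \frakA_\text{odd}$) for \emph{every} non-principal ultrafilter $\calU$.

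The argument proceeds by contradiction, in direct analogy with the earlier corollary about Hanf sequences at the end of Section 3. Suppose there exists some $m \in \N$ for which no such threshold $N$ exists. Then the set
\[ I := \{ n \in \N \colon \frakA_{2n+1} \not\equiv_m \frakA_{2n+2} \} \]
is infinite. The collection $\{I\} \cup \{ M \subseteq \N \colon \N \setminus M \text{ is finite} \}$ then has the finite intersection property, so it extends to a non-principal ultrafilter $\calU$ on $\N$ with $I \in \calU$.

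Now form $\frakA_\text{odd} = \prod_n \frakA_{2n+1}/\calU$ and $\frakA_\text{even} = \prod_n \frakA_{2n+2}/\calU$ with respect to this particular $\calU$. By the preceding lemma they are isomorphic, hence elementarily equivalent. By Lemma \ref{lemmaUltraproductsElEquivIffMostFactorsMEquiv} applied in the converse direction (valid because $\{<\}$ is finite relational), $\calU$-many indices $n$ satisfy $\frakA_{2n+1} \equiv_m \frakA_{2n+2}$. But these indices form the complement of $I$ in $\N$, contradicting $I \in \calU$.

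There is no real obstacle here: the isomorphism lemma has already done the substantive work, and the only subtlety is the standard trick of extending the Fréchet filter together with $I$ to a non-principal ultrafilter in order to make Lemma \ref{lemmaUltraproductsElEquivIffMostFactorsMEquiv} applicable in the direction we need. The proof is essentially a transcription of the earlier Hanf-sequence corollary's proof, with the isomorphism of $\frakA_\text{even}$ and $\frakA_\text{odd}$ playing the role that Theorem \ref{theoremHanfSequences} played there.
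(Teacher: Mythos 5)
Your proof is correct and follows exactly the paper's own argument: assume the set of bad indices is infinite, extend it together with the Fréchet filter to a non-principal ultrafilter, and derive a contradiction from the isomorphism of the two ultraproducts via the converse direction of Lemma \ref{lemmaUltraproductsElEquivIffMostFactorsMEquiv}. The only difference is that you spell out the details the paper leaves implicit.
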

\begin{proof}
	Assume there were infinitely many $n$ such that $\frakA_{2n + 1} \not\equiv_m \frakA_{2n + 2}$. Then we could construct a non-principal ultrafilter containing the set of all such $n$. In the light of the preceding lemma, this is a contradiction to Lemma \ref{lemmaUltraproductsElEquivIffMostFactorsMEquiv}.
\end{proof}

\begin{corollary}
	Let $m \in \N$. Then almost all $\frakA_n$ are $m$-equivalent, \ie there exists $N \in \N$ such that \[ \frakA_n \equiv_m \frakA_N \] for all $n \geq N$.
\end{corollary}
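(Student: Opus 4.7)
The plan is to strengthen the previous corollary, which handled step-$2$ shifts, to arbitrary step-$1$ shifts; the current statement will then follow by transitivity of $\equiv_m$.

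Concretely, I would first establish, for every non-principal ultrafilter $\calU$ on $\N$, the isomorphism
\[ {\prod}_n \frakA_n / \calU \simeq {\prod}_n \frakA_{n+1} / \calU, \]
by a verbatim repetition of the argument in the preceding lemma with $(\frakA_n)_n$ in place of $(\frakA_{2n+1})_n$ and $(\frakA_{n+1})_n$ in place of $(\frakA_{2n+2})_n$. The diagonal-style map $[(a_n)_n] \mapsto [(a_n)_n]$ is well defined and order-preserving since $a_n \leq n < n+1$, it is injective and order-reflecting by \loz's theorem, and it realises the source as the complement of the maximal element $[(n+1)_n]$ in the target. Both ultraproducts are infinite discrete linear orders with minimum and maximum, so each has order type $\omega + (\omega^{\ast} + \omega) \cdot \kappa + \omega^{\ast}$ for some $\kappa$; removing a single element from either end leaves this order type unchanged, so the source and target are isomorphic.

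Given this isomorphism, the two ultraproducts are in particular elementarily equivalent, so Lemma \ref{lemmaUltraproductsElEquivIffMostFactorsMEquiv} (applicable because $\sigma = \{ < \}$ is finite and relational) yields that for every $m \in \N$ there are $\calU$-many indices $n$ with $\frakA_n \equiv_m \frakA_{n+1}$. Mimicking the previous corollary, if the set $I_m := \{ n \in \N \colon \frakA_n \not\equiv_m \frakA_{n+1} \}$ were infinite, then $I_m$ together with the Fréchet filter would have the finite intersection property and so extend to a non-principal ultrafilter $\calU$, immediately contradicting the above. Hence $I_m$ is finite for each $m$.

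Finally, for fixed $m$, choose $N > \max I_m$; then for every $n \geq N$ the chain $\frakA_N \equiv_m \frakA_{N+1} \equiv_m \cdots \equiv_m \frakA_n$ combined with transitivity of $\equiv_m$ gives $\frakA_n \equiv_m \frakA_N$, as required. I do not foresee any real obstacle: the only new observation beyond the previous corollary is that the structural isomorphism of the preceding lemma adapts without change from the pair $(\frakA_{2n+2}, \frakA_{2n+1})$ to the pair $(\frakA_{n+1}, \frakA_n)$.
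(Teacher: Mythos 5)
Your proof is correct and follows essentially the same route as the paper: the same structural argument (deleting the maximal element of an infinite discrete order with endpoints does not change its order type), the same ultrafilter-extension argument to rule out infinitely many exceptional indices, and transitivity of $\equiv_m$ to conclude. The only difference is that you run the shift argument once with step $1$ on the whole sequence, whereas the paper runs it a second time with step $2$ (comparing $\frakA_{2n+2}$ with $\frakA_{2n+3}$) and combines with the preceding corollary; your version is a mild streamlining of the same idea.
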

\begin{proof}
	We can repeat the proof of the lemma to get
		\[ \prod_n \frakA_{2n + 2} / \calU \simeq \prod_n \frakA_{2n + 3} / \calU .\]
	As in the preceding corollary, this yields $\frakA_{2n + 2} \equiv_m \frakA_{2n + 3}$ for almost all $n$. Combining this with the preceding corollary we get $\frakA_n \equiv_m \frakA_{n + 1}$ for all $n$ beyond some finite $N$.
\end{proof}

\begin{remark}
	This last corollary is a much more general inexpressibility result than the theorem at the start of the section. It is also a standard result in first-order logic, which is usually obtained by giving a winning strategy for the duplicator in an $m$-round Ehrenfeucht game. Since locality theorems are not applicable -- the Gaifman graph of a linear ordering is always a complete graph -- this is normally done explicitly. We did not need games at all and used ultraproducts instead.
\end{remark}

We note that our results for finite directed path graphs are stronger than those for finite linear orderings. We showed that the sequence of finite directed path graphs is a Hanf sequence, therefore any two ultraproducts \wrt non-principal ultrafilters are isomorphic. For finite linear orderings, we have only shown that the ultraproduct does not change when we shift the ultrafilter by one position (corresponding to an ultrafilter on the odd numbers versus one on the even numbers). Only under the continuum hypothesis do we get isomorphy for two arbitrary ultrafilters due to \ref{corollaryElemEquivUltraprodsAreIsomAssumingCH}.
	
	This difference between finite linear orderings and directed path graphs should not surprise us as it is a familiar observation that a successor relation is first-order definable from an ordering but not vice versa. When we leave the finite by means of an ultraproduct (preserving not much more than first-order properties), the natural correspondence of successor structure and ordering breaks down.

\section{Conclusion}

We have explored the ultraproduct method as an independent approach to first-order inexpressibility results. We have proved well-known inexpressibility results for finite linear orderings and finite graphs without resorting to either compactness or Ehren\-feucht-Fra\"issé arguments. Only for our new proof of Gaifman's theorem did we need games. Hence we have found ultraproducts to be an unexpectedly useful tool for the investigation of first-order-properties in finite model theory.

Some obvious structural questions about ultraproducts could not be answered by this paper. We have already seen in the first section that properties of ultraproducts may depend on the continuum hypothesis. It would be desirable to further investigate in which ways elementarily equivalent $\N$-fold ultraproducts of cardinality $2^\omega$ may fail to be isomorphic and whether such failures must always occur when we assume the negation of the continuum hypothesis. Furthermore, we would like to see an example for a single structure such that the isomorphism types of its $\N$-fold ultrapowers depend on the choice of non-principal ultrafilters. Lastly, we would like to see whether there are any ``natural'' examples for non-isomorphic elementarily equivalent ultraproducts -- for example, a closer investigation of the ultraproducts of finite linear orderings would be helpful, as in the last section we could only show elementary equivalence of different ultraproducts.

\bibliographystyle{alpha}
\bibliography{Ultraproducts}

\end{document}